\documentclass[11pt]{article}
\usepackage{smile}

\usepackage{fullpage}
\usepackage{url}
\usepackage{lscape}
\usepackage{bigints}
\usepackage{framed}
\usepackage{mdframed}
\usepackage{enumerate}
\usepackage[inline]{enumitem}
\usepackage[T1]{fontenc}
\usepackage{moresize}
\usepackage{bm}
\usepackage{bbm}
\usepackage{dsfont}
\usepackage{amsmath}
\usepackage{amssymb}
\usepackage{amsthm}
\usepackage{amsfonts}
\usepackage{stmaryrd}
\usepackage{array}
\usepackage{mathrsfs}
\usepackage{mathtools} 
\usepackage{extarrows}
\usepackage{stackrel}
\usepackage{relsize,exscale}
\usepackage{scalerel}
\usepackage{colortbl}
\usepackage[nodisplayskipstretch]{setspace}
\usepackage{color}
\usepackage[usenames,dvipsnames]{xcolor}
\usepackage{cancel}
\usepackage{soul}
\usepackage{undertilde}
\usepackage{xfrac}
\usepackage{siunitx}
\usepackage{graphicx}
\usepackage{float}
\usepackage{rotating}
\usepackage{subcaption}
\usepackage{overpic}
\usepackage[all]{xy}
\usepackage{tikz}
\usetikzlibrary{arrows,matrix,positioning,calc,automata,patterns}
\usepackage{booktabs}
\usepackage{dcolumn}
\usepackage{multirow}
\usepackage{diagbox}
\usepackage{tabularx}
\usepackage{verbatim}
\usepackage{listings}
\usepackage[ruled,vlined]{algorithm2e}
\usepackage{fancyvrb}
\usepackage{hyperref}
\usepackage[round]{natbib}
\usepackage{sectsty}

\hypersetup{
    bookmarks=true,         
    unicode=false,          
    pdftoolbar=true,        
    pdfmenubar=true,        
    pdffitwindow=false,     
    pdfstartview={FitH},    
    pdftitle={My title},    
    pdfauthor={Author},     
    pdfsubject={Subject},   
    pdfcreator={Creator},   
    pdfproducer={Producer}, 
    pdfkeywords={key1, key2}, 
    pdfnewwindow=true,      
    colorlinks=true,        
    linkcolor=blue,         
    citecolor=blue,         
    filecolor=blue,         
    urlcolor=cyan           
}

\usepackage{stackengine}
\stackMath
\newcommand\tenq[2][1]{%
\def\useanchorwidth{T}%
\ifnum#1>1%
\stackunder[0pt]{\tenq[\numexpr#1-1\relax]{#2}}{\!\scriptscriptstyle\thicksim}%
\else%
\stackunder[1pt]{#2}{\!\scriptstyle\thicksim}%
\fi%
}

\makeatletter
\DeclareRobustCommand\widecheck[1]{{\mathpalette\@widecheck{#1}}}
\def\@widecheck#1#2{%
    \setbox\z@\hbox{\m@th$#1#2$}%
    \setbox\tw@\hbox{\m@th$#1%
       \widehat{%
          \vrule\@width\z@\@height\ht\z@
          \vrule\@height\z@\@width\wd\z@}$}%
    \dp\tw@-\ht\z@
    \@tempdima\ht\z@ \advance\@tempdima2\ht\tw@ \divide\@tempdima\thr@@
    \setbox\tw@\hbox{%
       \raise\@tempdima\hbox{\scalebox{1}[-1]{\lower\@tempdima\box
\tw@}}}%
    {\ooalign{\box\tw@ \cr \box\z@}}}
\makeatother

\def\tr{\mathop{\text{tr}}\kern.2ex}

\def\P{{\mathrm P}}
\def\Q{{\mathrm Q}}

\def\E{{\mathrm E}}

\def\d{{\mathrm d}}

\def\Pn{{\mathbb{P}_{n}}}
\def\Pk{{\mathbb{P}_{n,k}}}
\def\BtPk{{\mathbb{P}_{n,k}^{\ast}}}
\def\tBtPk{{\tilde{\mathbb{P}}_{n,k}^{\ast}}}
\def\Gk{{\mathbb{G}_{n,k}}}
\def\BtGk{{\mathbb{G}_{n,k}^{\ast}}}
\def\BtGko{{\mathbb{G}_{n,k}^{\circ}}}

\newcommand{\zahl}[1]{\llbracket #1\rrbracket}
\newcommand\yestag{\addtocounter{equation}{1}\tag{\theequation}}

\newcolumntype{L}[1]{>{\raggedright\let\newline\\\arraybackslash\hspace{0pt}}m{#1}}
\newcolumntype{C}[1]{>{  \centering\let\newline\\\arraybackslash\hspace{0pt}}m{#1}}
\newcolumntype{R}[1]{>{ \raggedleft\let\newline\\\arraybackslash\hspace{0pt}}m{#1}}
\newcolumntype{d}[1]{D{.}{.}{#1}}
\newcolumntype{H}{>{\setbox0=\hbox\bgroup}c<{\egroup}@{}}
\newcolumntype{Z}{>{\setbox0=\hbox\bgroup}c<{\egroup}@{\hspace*{-\tabcolsep}}}
\newcolumntype{b}{X}
\newcolumntype{s}{>{\hsize=.5\hsize}X}

\DeclarePairedDelimiter\floor{\lfloor}{\rfloor}

\newcommand{\plim}{\text{plim}}

\numberwithin{equation}{section}

\newtheorem{theorem}{Theorem}[section]
\newtheorem{lemma}{Lemma}[section]
\newtheorem{proposition}{Proposition}[section]
\newtheorem{assumption}{Assumption}[section]

\newtheorem{innercustomgeneric}{\customgenericname}
\providecommand{\customgenericname}{}
\newcommand{\newcustomtheorem}[2]{%
  \newenvironment{#1}[1]
  {%
   \renewcommand\customgenericname{#2}%
   \renewcommand\theinnercustomgeneric{##1}%
   \innercustomgeneric
  }
  {\endinnercustomgeneric}
}
\newcustomtheorem{customdefinition}{Definition}
\newcustomtheorem{customdefinitions}{Definitions}
\newcustomtheorem{customtheorem}{Theorem}
\newcustomtheorem{customassumption}{Assumption}
\newcustomtheorem{customlemma}{Lemma}
\newcustomtheorem{customexample}{Example}
\theoremstyle{definition}

\newtheorem{example}{Example}[section]
\newtheorem{remark}{Remark}[section]

\usepackage{enumitem}
\makeatletter
\newcommand{\mylabel}[2]{#2\def\@currentlabel{#2}\label{#1}}
\makeatother

\graphicspath{{./fig3/}}

\allowdisplaybreaks

\begin{document}

\setlength{\abovedisplayskip}{5pt}
\setlength{\belowdisplayskip}{5pt}
\setlength{\abovedisplayshortskip}{5pt}
\setlength{\belowdisplayshortskip}{5pt}
\hypersetup{colorlinks,breaklinks,urlcolor=blue,linkcolor=blue}

\title{\LARGE Bootstrap consistency for general double/debiased machine learning estimators}

\author{Ziming Lin\thanks{Department of Statistics, University of Washington, Seattle, WA 98195, USA; e-mail: {\tt zmlin@uw.edu}} ~~and~~Fang Han\thanks{Department of Statistics, University of Washington, Seattle, WA 98195, USA; e-mail: {\tt fanghan@uw.edu}}
}

\date{\today}

\maketitle

\vspace{-1em}

\begin{abstract}
Double/debiased machine learning (DML) provides a general framework for inference with high-dimensional or otherwise complex nuisance parameters by combining Neyman-orthogonal scores with cross-fitting, thereby circumventing classical Donsker-type conditions in many modern machine-learning settings. Despite its strong empirical performance, bootstrap inference for DML estimators has received little theoretical justification. This is particularly noteworthy since bootstrap methods are suggested and used for inference on DML estimators, even though bootstrap procedures can fail for estimators that are root-\(n\) consistent and asymptotically normal. This paper fills this gap by establishing bootstrap validity for DML estimators under general exchangeably weighted resampling schemes, with Efron’s bootstrap as a special case. Under exactly the same conditions required for the validity of DML itself, we prove that the bootstrap law converges conditionally weakly to the sampling law of the original estimator.
\end{abstract}

{\bf Keywords:} exchangeably weighted bootstrap, sample-splitting, cross-fitting, double machine learning.

\section{Introduction}

Double/debiased machine learning (DML) \citep{chernozhukov2018double} provides a general framework for conducting inference on a low-dimensional target parameter in the presence of high-dimensional or otherwise complex nuisance components. A key advantage of DML is that it avoids the classical Donsker-type conditions that are ubiquitous in semiparametric inference. This is accomplished by combining Neyman-orthogonal score functions with cross-fitting. As a result, DML offers a convenient and broadly applicable framework for constructing root-\(n\) consistent and asymptotically normal estimators under conditions well suited to modern machine-learning methods for nuisance estimation in high-complexity settings.

While normal approximations and Wald-type inference are available and computationally appealing for inferring DML estimators, bootstrap procedures \citep{efron1979bootstrap, wu1986jackknife} remain attractive for several reasons. They provide an intuitive route to distributional approximation \citep{imbens2024causal}, interface naturally with black-box nuisance learners, and readily accommodate a variety of inferential methods, including percentile, basic, and studentized confidence intervals \citep{hall1988theoretical, diciccio1988review}. It is also worth noting that, despite the current lack of general theoretical justification, bootstrap methods have already been suggested and used for inference on DML and related estimators \citep{cai2020nonparametric,fingerhut2022coordinated, dukes2024doubly}.

Our interest in the bootstrap validity of DML estimators is motivated by two considerations. First, it is well known that root-\(n\) consistency and asymptotic normality alone do not guarantee bootstrap validity. This point has become especially clear in recent work, including \cite{abadie2008failure} and \cite{lin2023failure} (see also \cite{lin2026consistency}), which establish bootstrap inconsistency for nearest-neighbor matching estimators and Chatterjee's rank correlation. These estimators, much like DML estimators, are root-\(n\) consistent and asymptotically normal, yet the bootstrap is invalid.

Second, from a technical standpoint, bootstrap inference for estimators constructed via sample-splitting and cross-fitting, as in the DML framework, remains largely unexplored and is far from straightforward. In particular, even when bootstrap validity does hold, it remains unclear whether it can be established under exactly the same conditions that justify Wald-type inference for the original DML estimator.


This paper studies the bootstrap validity of DML estimators under the general exchangeably weighted bootstrap scheme of \citet{praestgaard1993exchangeably}. To highlight the main ideas, we focus on the simplest---and arguably most commonly used---bootstrap implementation, namely, a single round of resampling from the full dataset. By contrast, bootstrap procedures that resample separately within folds are technically easier to analyze and lead to the same conclusion. On the theoretical side, under exactly the same conditions imposed in \citet{chernozhukov2018double} for the validity of the original DML estimators, we establish a bootstrap linear representation and prove conditional weak convergence of the bootstrap law to the sampling law of the original estimator.

Our contribution is best understood in relation to the literature on exchangeably weighted bootstrap for semiparametric \(M\)-estimation \citep{cheng2010bootstrap}, as well as more recent bootstrap theory for asymptotically linear estimators with data-adaptive nuisance estimation \citep{tang2024consistency}. The present paper complements this line of work by treating the orthogonal, cross-fitted DML setting directly. In particular, although our bootstrap procedure is still exchangeably weighted, the analysis is carried out under the orthogonality and cross-fitting conditions standard in DML, rather than under the classical Donsker-type empirical-process assumptions commonly invoked in related bootstrap studies.


The remainder of the paper is organized as follows.
Section~\ref{sec:setup} reviews the DML framework and introduces the
exchangeably weighted bootstrap procedure.
Section~\ref{sec:main} presents the main bootstrap consistency theorem.
Section~\ref{sec:proof} contains the proofs and additional technical lemmas.

\paragraph*{Notation.}
For any integers $n,d\ge 1$, let $\zahl{n}:= \{1,2,\ldots,n\}$, and $\bR^d$ be the $d$-dimensional real space.
For any $a,b \in \bR$, write $a \vee b = \max\{a,b\}$ and $a \wedge b = \min\{a,b\}$. We use $\lVert \cdot \rVert$ to denote the Euclidean norm.
For any two real sequences $\{a_n\}_{n\ge1}$ and $\{b_n\}_{n\ge1}$, we write $a_n \lesssim b_n$ (or $a_n=O(b_n)$) if there exists a universal constant that upper bounds $|a_n|/|b_n|$ for all sufficiently large $n$, and write $a_n=o(b_n)$ if $a_n/b_n \to 0$ as $n$ goes to infinity. We write $a_n\asymp b_n$ if both $a_n\lesssim b_n$ and $b_n\lesssim a_n$ hold true. For any sequence of random elements $\{Z_n\}_{n \ge 1}$, write $Z_n = o_\P(1)$ if $Z_n$ goes to $0$ in $\P$-probability and $Z_n = O_\P(1)$ if $Z_n$ is bounded in $\P$-probability. We use ``$\plim$'' to denote the limit in probability for random variables. 

\section{Setup}\label{sec:setup}

\subsection{Double/debiased machine learning framework}

The DML framework assumes the existence of a true low-dimensional target parameter \(\theta_0 \in \Theta \subseteq \bR^{d_\theta}\) and a true nuisance parameter \(\eta_0\) belonging to a convex set \(T\) in a normed vector space equipped with norm \(\lVert \cdot \rVert_T\). The pair \((\theta_0,\eta_0)\) is assumed to be identified by the moment condition
\begin{align*}
    \P_X\big[\psi(\cdot;\theta_0,\eta_0)\big] = 0. \yestag\label{eq: moment condition}
\end{align*}
Here \(\psi=(\psi_1,\ldots,\psi_{d_\theta})^\top\) is a known vector-valued score function,  \(X\) is a random variable with the law $\P_X$, 
and we use the customary operator notation
\[
\P_X[f] := \int f\, d\P_X,
\]
whenever the right-hand side is well defined.

Given \(n\) observations \(\{X_i\}_{i=1}^n\) drawn from \(\P_X\), let \(K\) be a fixed positive integer. For notational and proof simplicity, we assume that \(n\) is divisible by \(K\), and the index set \(\zahl{n}\) is partitioned into \(K\) nonrandom folds \(\{I_k\}_{k=1}^K\), each of cardinality \(m := n/K\). For each \(k \in \zahl{K}\), let \(\hat{\eta}_{0,k}\) denote a user-specified nuisance estimator constructed using only the observations outside fold \(I_k\), that is,
\[
\hat{\eta}_{0,k}
=
\hat{\eta}_{0,k}\Big(\Big\{X_i\Big\}_{i \in I_k^c}\Big),
\qquad k \in \zahl{K},
\]
where \(I_k^c := \zahl{n} \setminus I_k\).

For each \(k \in \zahl{K}\), let \(\Pk\) denote the empirical measure based on the observations in the \(k\)-th fold, namely,
\[ 
\Pk := \frac{1}{m} \sum_{i\in I_k} \delta_{X_i} 
\] with $\delta_x$ representing the Dirac measure at the point mass $x$. The fold-specific estimator \(\check{\theta}_{0,k}\) is then defined as an approximate \(\epsilon_n\)-solution to the empirical moment condition:
\begin{align}\label{eq:DML}
    \Big\lVert \Pk\Big[\psi(\cdot;\check{\theta}_{0,k},\hat{\eta}_{0,k})\Big]\Big\rVert
    \le
    \inf_{\theta\in\Theta}
    \Big\lVert \Pk\Big[\psi(\cdot;\theta,\hat{\eta}_{0,k})\Big]\Big\rVert
    + \epsilon_n,
    \qquad \text{for all } k\in\zahl{K}.
\end{align}
Here \(\epsilon_n\) represents the allowed computational error.

Finally, the cross-fitted DML estimator \(\check{\theta}_0\) of \(\theta_0\) is defined as the average of the fold-specific estimators:
\[
\check{\theta}_0 := \frac{1}{K}\sum_{k\in\zahl{K}} \check{\theta}_{0,k}.
\]

\subsection{Bootstrap procedure}

For the bootstrap procedure, we consider the class of exchangeably weighted bootstrap as introduced in \cite{praestgaard1993exchangeably}, which gives general conditions for the bootstrap weights.

\begin{assumption}
\label{asp: BTWeights}
For the triangular array of bootstrap weights $W$ of the law $\P_w$, it is assumed that the following holds true.
\begin{enumerate} [itemsep=-.5ex, label=(\roman*)]
    \item The random vector $W=(W_1, \ldots, W_n)^{\top}$ is exchangeable.
    \item $W_i \ge 0$ for each $i \in \zahl{n}$ and $\sum_{i=1}^n W_i = n$.
    \item $\limsup_{n \to \infty} \int_0^{\infty} \sqrt{\P_W \left(W_1 \ge t\right)} \d t \le C$ for some constant $C< \infty$.
    \item $\lim_{t_0 \to \infty} \limsup_{n \to \infty} \sup_{t \ge t_0} t^2 \P_W \left(W_1 \ge t\right) = 0$.
    \item $\operatorname{plim}_{n \to \infty} n^{-1} \sum_{i=1}^n (W_i-1)^2 = c^2 >0$.
\end{enumerate}
\end{assumption}

Assumption \ref{asp: BTWeights} covers standard exchangeably weighted resampling schemes and (iii)-(v) can be implied by moment conditions on $W$; cf. Section 3 of \cite{praestgaard1993exchangeably}. 

\begin{example}\label{exp: BT, class}
 The resampling schemes satisfying Assumption~\ref{asp: BTWeights} include the following.
\begin{enumerate} [itemsep=-.5ex, label=(\roman*)]
    \item Efron’s bootstrap \citep{efron1979bootstrap}: the weight vector is multinomially distributed such that
    \[
    W \sim {\rm Mult}(n;(n^{-1}, \ldots, n^{-1})).
    \]
    In this case, $c^2=1$.
    \item Normalized multiplier bootstrap: the weight vector is 
    \[
    W_i=Y_i\Big/\Big(\frac{1}{n}\sum_{i=1}^n Y_i\Big), ~~\text{for }i\in\zahl{n},
    \]
    where $\{Y_i\}_{i=1}^n$ are independent and identically distributed positive random variables satisfying 
    \[
    \int_0^{\infty} \sqrt{\P_Y \left(Y_1 \ge t\right)} \d t<\infty.
    \]
    In this case $c^2 = \Var[Y_1]/(\E[Y_1])^2$.  
    
    A special normalized multiplier bootstrap procedure is the Bayesian bootstrap \citep{rubin1981bayesian}, obtained by taking $Y_1 \sim {\rm Exp}(1)$, for which $c^2=1$. More generally, $Y_1 \sim \Gamma(\alpha,\alpha)$ yields $c^2 = 1/\alpha$. 
    
    \item Double bootstrap \citep{beran1987prepivoting}: let 
    \[
    M=(M_1,\ldots,M_n)^\top \sim {\rm Mult}(n;(n^{-1}, \ldots, n^{-1})); 
    \]
    then conditioning on $M$, $W$ is sampled from 
    \[
    W|M \sim {\rm Mult}(n;(n^{-1}M_1, \ldots, n^{-1}M_n)). 
    \]
    In this case $c^2=2$.

\end{enumerate}
\end{example}

We are now ready to define the bootstrapped DML estimator, $\check{\theta}_0^*$. Without refitting the nuisance estimator $\hat{\eta}_{0,k}$, we define the {\it fold-wise} bootstrapped estimator $\check{\theta}_{0,k}^*$ to be {\it any} vector that satisfies the following approximate $\epsilon_n$-solution condition:
\begin{align}\label{eq:boots-est}
   \Big\lVert \BtPk \Big[\psi(\cdot; \check{\theta}_{0,k}^*, \hat{\eta}_{0,k})\Big] \Big\rVert \le \inf_{\theta \in \Theta} \Big\lVert \BtPk \Big[\psi(\cdot; \theta, \hat{\eta}_{0,k})\Big] \Big\rVert + \epsilon_n, 
\end{align}
where $\BtPk$ is the {\it fold-restricted weighted empirical measure} so that 
\[
\BtPk[f] = \frac{1}{m}\sum_{i\in I_k}W_i f(X_i)  ~~\text{ for any measurable function $f$.}
\]
Of note, in practice, there may be multiple solutions satisfying \eqref{eq:boots-est}. In particular, although this occurs with vanishing probability, it is possible that \(W_i = 0\) for all \(i \in I_k\) for some \(k \in \zahl{K}\). In such cases, \(\check{\theta}_{0,k}^*\) may be taken as any vector.

The bootstrapped DML estimator is then the average of the fold-wise estimators: 
\begin{align*}
    \check{\theta}_0^* = \frac{1}{K} \sum_{k \in \zahl{K}} \check{\theta}_{0,k}^*.
\end{align*}

\begin{remark}
In the special case of Efron’s bootstrap, the weighted empirical measure \(\BtPk\) admits the following equivalent representation. First, draw bootstrap indices \(\{j_i\}_{i=1}^n\) independently from the uniform distribution on \(\zahl{n}\). Then form the bootstrap sample \(\{X_i^* := X_{j_i}\}_{i=1}^n\). Under this representation, the fold-restricted bootstrap empirical measure can be written as
\begin{align*}
    \BtPk[f]
    =
    \frac{1}{m}\sum_{i=1}^n \ind(j_i \in I_k)\, f(X_i^*),
    \qquad \text{for any measurable function } f.
\end{align*}
\end{remark}

\begin{remark}
For simplicity of presentation, this paper focuses exclusively on bootstrap procedures based on a single round of resampling from the full dataset. In practice, however, it may be also natural to consider within-fold resampling. Specifically, for each \(k \in \zahl{K}\), let \((W_{k,1},\ldots,W_{k,m})\) be bootstrap weights satisfying Assumption \ref{asp: BTWeights}, generated independently across folds. The corresponding weighted bootstrap empirical measure is then defined by
\[
\tBtPk[f]
=
\frac{1}{m}\sum_{i\in I_k} W_{k,i} f(X_i),
\qquad \text{for any measurable function } f.
\]
The associated bootstrapped DML estimator can be defined analogously. By inspecting the proofs below, it is clear that the bootstrap consistency result established in Theorem \ref{thm:BT,DML,consistency} continues to hold for this within-fold bootstrap procedure as well. We omit the details for brevity.
\end{remark}

\section{Theory}\label{sec:main}

To present our main theory, we first recall the normal approximation result for the original DML estimator from \citet{chernozhukov2018double}. We begin with some additional notation. For any measurable real-valued function \(f\), let
\[
\lVert f \rVert_{\P,q}
=
\Big(\int |f(x)|^q \, d\P(x)\Big)^{1/q}
\]
denote its \(L^q(\P)\) norm. For simplicity, whenever no confusion can arise, we suppress the distinction between \(\P_X\) and \(\P_W\) and write both simply as \(\P\).

We also define, for any \(\eta \in T\), the G\^ateaux derivative of the map \(\eta \mapsto \P[\psi(\cdot;\theta_0,\eta)]\) at \(\eta_0\) in the direction \(\eta-\eta_0\) (see, e.g., Chapter 7.2 of \cite{luenberger1997optimization}) by
\begin{align*}
\partial_\eta \P\Big[\psi\Big(\cdot;\theta_0,\eta_0\Big)\Big][\eta-\eta_0]
:=
\partial_r \P\Big[\psi\Big(\cdot;\theta_0,\eta_0 + r(\eta-\eta_0)\Big)\Big]\Big\rvert_{r=0},
\end{align*}
and we assume throughout that this derivative exists. 

Hereafter, let \(c_0>0\), \(c_1>0\), \(a>1\), \(v>0\), and \(q>2\) be some fixed finite constants, and let \(\{\delta_n\}_{n\ge 1}\) and \(\{\tau_n\}_{n\ge 1}\) be some sequences of positive constants converging to zero. We further assume that these sequences satisfy
\[
\delta_n \ge n^{-1/2+1/q}\log n,
\qquad \text{and} \qquad
n^{-1/2}\log n \le \tau_n \le \delta_n,
\qquad \text{for all } n\ge 1.
\]

Let \(\{\cP_n\}_{n\ge 1}\) denote the sequence of classes of data-generating distributions over which all subsequent results are stated uniformly. We now summarize the assumptions needed to establish the theoretical properties of DML. The first assumption collects the conditions introduced in Section~\ref{sec:setup}.

\begin{assumption} \label{asp: BT,DML,data}
For \(\P \in \cP_n\), the following conditions are assumed to hold:
\begin{enumerate}[itemsep=-.5ex,label=(\roman*)]
    \item \(\{X_i\}_{i=1}^n\) are independent draws from \(\P\);
    \item there exists a fixed integer \(K \ge 2\) such that \(n\) is divisible by \(K\) and \(m = n/K\);
    \item the score function \(\psi(\cdot;\theta_0,\eta_0)\) satisfies the moment condition
    \[
    \P\big[\psi(\cdot;\theta_0,\eta_0)\big] = 0;
    \]
    \item the computational error \(\epsilon_n\) in \eqref{eq:DML} and \eqref{eq:boots-est} satisfies
    \[
    \epsilon_n = o(\delta_n n^{-1/2}).
    \]
\end{enumerate}
\end{assumption}


We next impose a collection of regularity conditions paralleling Assumption 3.3 of \citet{chernozhukov2018double}. They formalize the local moment structure of the problem and the approximate Neyman-orthogonality of the score.

\begin{assumption}\label{asp: BT,DML,1}
For every \(n \ge 3\) and every \(\P \in \mathcal{P}_n\), the following conditions hold true.
\begin{enumerate}[itemsep=-.5ex,label=(\roman*)]
    \item The true parameter \(\theta_0\) satisfies the moment restriction \eqref{eq: moment condition}, and the parameter space \(\Theta\) contains the ball centered at \(\theta_0\) with radius \(c_1 n^{-1/2}\log n\).

    \item The mapping
    \[
    (\theta,\eta) \mapsto \P\big[\psi(\cdot;\theta,\eta)\big]
    \]
    is twice continuously G\^ateaux differentiable on \(\Theta \times T\).

    \item For every \(\theta \in \Theta\), the following local identification condition is satisfied:
    \[
    2\Big\lVert \P\big[\psi(\cdot;\theta,\eta_0)\big] \Big\rVert
    \ge
    \|J_0(\theta-\theta_0)\| \wedge c_0,
    \]
    where
    \begin{align*}
        J_0
        :=
        \left.
        \partial_{\theta^\top}
        \Big\{
        \P\big[\psi(\cdot;\theta,\eta_0)\big]
        \Big\}
        \right|_{\theta=\theta_0}.
    \end{align*}
    In addition, all singular values of \(J_0\) lie in the interval \([c_0,c_1]\).

    \item The score function \(\psi\) satisfies an approximate Neyman-orthogonality condition on the nuisance realization set \(\mathcal{T}_n \subset T\) with tolerance level \(\lambda_n = \delta_n n^{-1/2}\). Specifically, for every \(\eta \in \mathcal{T}_n\),
    \begin{align*}
        \Big\lVert
        \partial_\eta \P\big[\psi(\cdot;\theta_0,\eta_0)\big][\eta-\eta_0]
        \Big\rVert
        \le
        \lambda_n.
    \end{align*}
\end{enumerate}
\end{assumption}

We next impose a collection of regularity conditions paralleling Assumption 3.4 of \cite{chernozhukov2018double}. These conditions govern the complexity and regularity of the score, as well as the quality of nuisance estimation.

\begin{assumption}\label{asp: BT,DML,2}
For every \(n \ge 3\) and every \(\P \in \mathcal{P}_n\), the following conditions are assumed to be satisfied.
\begin{enumerate}[itemsep=-.5ex,label=(\roman*)]
    \item Let \(I\) be a random subset of \(\zahl{n}\) of cardinality \(n/K\). The nuisance estimator
    \[
    \hat{\eta}_0 = \hat{\eta}_0\big(\{X_i\}_{i\in I^c}\big)
    \]
    belongs to the realization set \(\mathcal{T}_n\) with \(\P\)-probability at least \(1-\Delta_n\), where \(\{\Delta_n\}_{n\ge 1}\) is a sequence of positive constants converging to zero. Moreover, \(\mathcal{T}_n\) contains \(\eta_0\) and satisfies the conditions stated below.

    \item The parameter space \(\Theta\) is bounded. For each \(\eta \in \mathcal{T}_n\), define the class of score functions
    \[
    \mathcal{F}_{1,\eta}
    :=
    \Big\{
    \psi_j(\cdot;\theta,\eta)
    :
    j=1,\ldots,d_\theta,\ \theta\in\Theta
    \Big\}.
    \]
    This class is assumed to be suitably measurable, and its uniform covering entropy satisfies
    \begin{align*}
        \sup_{\Q}
        \log N\Big(
        \epsilon \|F_{1,\eta}\|_{\Q,2},
        \mathcal{F}_{1,\eta},
        \|\cdot\|_{\Q,2}
        \Big)
        \le
        v \log(a/\epsilon),
        \qquad \text{for all } 0<\epsilon\le 1,
    \end{align*}
    where \(F_{1,\eta}\) is a measurable envelope of \(\mathcal{F}_{1,\eta}\) satisfying
    \[
    \|F_{1,\eta}\|_{\P,q}\le c_1.
    \]

    \item The following rate conditions hold for the quantities \(r_n\), \(r_n'\), and \(\lambda_n'\):
    \begin{align*}
        r_n
        &:=
        \sup_{\eta\in\mathcal{T}_n,\ \theta\in\Theta}
        \Big\lVert
        \P\big[\psi(\cdot;\theta,\eta)-\psi(\cdot;\theta,\eta_0)\big]
        \Big\rVert
        \le
        \delta_n \tau_n, \\
        r_n'
        &:=
        \sup_{\eta\in\mathcal{T}_n,\ \|\theta-\theta_0\|\le \tau_n}
        \Big(
        \P \big\|
        \psi(\cdot;\theta,\eta)-\psi(\cdot;\theta_0,\eta_0)
        \big\|^2
        \Big)^{1/2},
        \qquad
        r_n' \log^{1/2}(1/r_n') \le \delta_n, \\
        \lambda_n'
        &:=
        \sup_{\substack{r\in(0,1),\ \eta\in\mathcal{T}_n,\\ \|\theta-\theta_0\|\le \tau_n}}
        \Big\lVert
        \partial_r^2
        \P\Big[
        \psi\big(\cdot;\theta_0+r(\theta-\theta_0),\,\eta_0+r(\eta-\eta_0)\big)
        \Big]
        \Big\rVert
        \le
        \delta_n n^{-1/2}.
    \end{align*}

    \item The score has nondegenerate covariance at the truth, in the sense that all eigenvalues of
    \begin{align*}
        \P\Big[
        \psi(\cdot;\theta_0,\eta_0)\psi(\cdot;\theta_0,\eta_0)^\top
        \Big]
    \end{align*}
    are bounded below by \(c_0\).
\end{enumerate}
\end{assumption}

Under these conditions, the original DML estimator \(\check{\theta}_0\) admits the following normal approximation result, which we again summarize from \cite{chernozhukov2018double}.

\begin{theorem}[Theorem 3.3, \cite{chernozhukov2018double}]
\label{thm:DML,Gaussian}
Suppose Assumptions~\ref{asp: BT,DML,data}--\ref{asp: BT,DML,2} hold. Then the DML estimator \(\check{\theta}_0\) lies in an \(n^{-1/2}\)-neighborhood of \(\theta_0\) and admits an asymptotically linear expansion. More precisely,
\begin{align*}
    \sqrt{n}(\check{\theta}_0-\theta_0)
    =
    \sqrt{n}\,\Pn \bar{\psi}_0(\cdot) + O_{\P}(\rho_n)
\end{align*}
uniformly over \(\P \in \cP_n\), where
\[
\Pn := \frac{1}{n}\sum_{i=1}^n \delta_{X_i},
\qquad
\bar{\psi}_0(\cdot) := -J_0^{-1}\psi(\cdot;\theta_0,\eta_0),
\]
and the remainder term satisfies
\begin{align*}
    \rho_n
    &:=
    n^{-1/2+1/q}\log n
    + r_n' \log^{1/2}(1/r_n')
    + n^{1/2}\lambda_n
    + n^{1/2}\lambda_n'
    + n^{1/2}\epsilon_n\\
    &\lesssim \delta_n = o(1).
\end{align*}
As a consequence, uniformly over \(\P \in \cP_n\),
\begin{align*}
    \sup_{t \in \bR^{d_\theta}}
    \Big|
    \P\big(\sqrt{n}(\check{\theta}_0-\theta_0)\le t\big)
    -
    \P\big(N(0,\Sigma^2)\le t\big)
    \Big|
    =
    o(1),
\end{align*}
where the inequality ``\(\le\)'' is understood componentwise, and
\begin{align*}
    \Sigma^2
    :=
    J_0^{-1}
    \E_{\P}\Big[\psi(X;\theta_0,\eta_0)\psi(X;\theta_0,\eta_0)^\top\Big]
    (J_0^{-1})^\top.
\end{align*}
\end{theorem}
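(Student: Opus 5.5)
The plan follows the argument of \citet{chernozhukov2018double}. I work fold by fold, conditioning on the auxiliary sample $\{X_i\}_{i\in I_k^c}$, and restrict attention to the event $\mathcal{E}_n$ on which $\hat\eta_{0,k}\in\mathcal{T}_n$ for all $k\in\zahl{K}$. By Assumption~\ref{asp: BT,DML,2}(i) and a union bound over the fixed number $K$ of folds, $\P(\mathcal{E}_n)\ge 1-K\Delta_n\to1$. On $\mathcal{E}_n$, conditionally on $I_k^c$, the nuisance $\hat\eta_{0,k}=:\eta$ is a fixed element of $\mathcal{T}_n$ and the observations in $I_k$ are i.i.d.\ from $\P$. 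This independence is exactly what cross-fitting provides, and it lets us avoid any Donsker condition over $\eta$. For each fixed $\eta\in\mathcal{T}_n$, the goal is to show that uniformly in $\eta$,
\[
\sqrt{m}\,(\check\theta_{0,k}-\theta_0)=-J_0^{-1}\sqrt{m}\,\Pk\psi(\cdot;\theta_0,\eta_0)+O_\P(\rho_n).
\]
Averaging over $k$ and using $\frac1K\sum_k \Pk=\Pn$ and $m=n/K$ then gives the stated expansion.

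\emph{Step 1 (empirical process bound).} Set $\mathbb{G}_{n,k}=\sqrt m(\Pk-\P)$. For fixed $\eta\in\mathcal{T}_n$, the class $\mathcal{F}_{1,\eta}$ is VC-type with envelope bounded in $L^q$ by Assumption~\ref{asp: BT,DML,2}(ii). A maximal inequality for VC-type classes with unbounded envelope (Chernozhukov--Chetverikov--Kato) then gives
\[
\sup_{\theta\in\Theta}\|\mathbb{G}_{n,k}\psi(\cdot;\theta,\eta)\|=O_\P(1).
\]
Applied to the localized class $\{\psi(\cdot;\theta,\eta)-\psi(\cdot;\theta_0,\eta_0):\|\theta-\theta_0\|\le\tau_n\}$, whose $L^2$ radius is at most $r_n'$, the same inequality gives
\[
O_\P\bigl(r_n'\log^{1/2}(1/r_n')+n^{-1/2+1/q}\log n\bigr).
\]
The constants are uniform in $\eta$, so the bounds hold unconditionally as well.

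\emph{Step 2 (consistency at rate $\tau_n$).} From the $\epsilon_n$-minimizing property at $\theta_0$, together with Step 1 and the bias bound $r_n\le\delta_n\tau_n$, we get
\[
\|\P\psi(\cdot;\check\theta_{0,k},\eta_0)\|\lesssim O_\P(m^{-1/2})+r_n+\epsilon_n=o_\P(\tau_n),
\]
using $\tau_n\ge n^{-1/2}\log n$. The identification condition in Assumption~\ref{asp: BT,DML,1}(iii), with singular values of $J_0$ at least $c_0$, yields $\|\check\theta_{0,k}-\theta_0\|\le\tau_n$ with probability tending to one.

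\emph{Step 3 (linearization).} For $\|\theta-\theta_0\|\le\tau_n$, Taylor-expand $f(r)=\P\psi(\cdot;\theta_0+r(\theta-\theta_0),\eta_0+r(\eta-\eta_0))$ to second order at $r=0$:
\[
\P\psi(\cdot;\theta,\eta)=J_0(\theta-\theta_0)+\partial_\eta\P\psi(\cdot;\theta_0,\eta_0)[\eta-\eta_0]+O(\lambda_n').
\]
Here $f(0)=0$ by the moment condition, the cross derivative is the $\eta$-direction term, which is at most $\lambda_n$ by Neyman orthogonality, and the second-order remainder is at most $\lambda_n'$. Combining this with Step 1,
\[
\Pk\psi(\cdot;\theta,\eta)=\Pk\psi(\cdot;\theta_0,\eta_0)+J_0(\theta-\theta_0)+O_\P\bigl(m^{-1/2}\rho_n\bigr),
\]
uniformly over the $\tau_n$-ball.

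\emph{Step 4 (solving the linearized equation).} The point $\bar\theta=\theta_0-J_0^{-1}\Pk\psi(\cdot;\theta_0,\eta_0)$ lies in the $\tau_n$-ball with high probability and lies in $\Theta$ by Assumption~\ref{asp: BT,DML,1}(i). At $\bar\theta$ the left side of the Step~3 expansion is $O_\P(m^{-1/2}\rho_n)$, so by the $\epsilon_n$-minimizing property
\[
\|\Pk\psi(\cdot;\check\theta_{0,k},\eta)\|\le O_\P(m^{-1/2}\rho_n)+\epsilon_n.
\]
Plugging $\check\theta_{0,k}$ into the expansion and inverting $J_0$, whose singular values are at least $c_0$, gives the fold-wise representation. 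Averaging over folds yields the asymptotic linearity with remainder $O_\P(\rho_n)$, and $\rho_n\lesssim\delta_n$ by the standing rate assumptions.

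\emph{Step 5 (Gaussian approximation).} The leading term $\sqrt n\,\Pn\bar\psi_0$ is a normalized sum of i.i.d.\ mean-zero vectors with $q>2$ moments and covariance eigenvalues bounded below by Assumption~\ref{asp: BT,DML,2}(iv). A Berry--Esseen (Lyapunov) bound therefore gives uniform normal approximation over $\P\in\cP_n$. The $O_\P(\rho_n)$ perturbation is absorbed by Gaussian anti-concentration, since $\Sigma^2$ has eigenvalues bounded below.

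\emph{Main obstacle.} The hardest step is Step 1, namely the maximal inequality with an unbounded $L^q$ envelope that keeps the $\log$ factors inside $\rho_n$. It must be applied conditionally on the auxiliary sample, with bounds uniform in $\eta\in\mathcal{T}_n$, and then de-conditioned via the bound $\P(\mathcal{E}_n^c)\le K\Delta_n$.
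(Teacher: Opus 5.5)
Your proposal is correct and follows essentially the same route as the proof in \cite{chernozhukov2018double}, which the paper cites rather than reproves and mirrors in Steps I, II, IV and V of its bootstrap argument. The shared steps are a preliminary $\tau_n$-rate from the identification condition, Taylor linearization controlled by $\lambda_n$ and $\lambda_n'$, the conditional maximal inequality (Lemma~\ref{lemma:MaximalIneq}) applied to the localized class, and the comparison point $\bar\theta_{0,k}=\theta_0-J_0^{-1}\Pk[\psi(\cdot;\theta_0,\eta_0)]$ used to bound the infimum term; your closing CLT-plus-anti-concentration step for fixed $d_\theta$ is likewise the standard one.
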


\vspace{0.2cm}

We then turn to the bootstrap world. To formulate the bootstrap result, we first introduce the underlying product probability space. Let $$(\Omega_X^\infty,\cA_X^\infty,\P_X^\infty)$$ denote the probability space supporting the data sequence \(\{X_i\}_{i=1}^n\), and let $$(\Omega_W,\cA_W,\P_W)$$ denote the probability space supporting the bootstrap weights. We write \(\P_{XW}\) for the probability measure on the corresponding product space, namely,
\begin{align}\label{eq:ProbSpace}
    \big(\Omega_X^\infty,\cA_X^\infty,\P_X^\infty\big)\times (\Omega_W,\cA_W,\P_W)
    =
    \big(\Omega_X^\infty\times\Omega_W,\cA_X^\infty\times\cA_W,\P_{XW}\big).
\end{align}
Throughout the paper, we assume that the bootstrap weights are independent of the data, so that $$\P_{XW}=\P_X^\infty\times\P_W.$$ Correspondingly, we write \(\E_{XW}^o\) for outer expectation with respect to \(\P_{XW}\), and use analogous notation for \(\E_{W|X}^o\), \(\E_X^o\), and \(\E_W\). For an outer probability measure \(\P^o\), we use the associated stochastic-order notation \(O_{\P}^o(1)\) and \(o_{\P}^o(1)\). For the uniform statements below, one may equivalently fix an arbitrary sequence \(\{\Q_n\}_{n\ge 1}\) with \(\Q_n\in\mathcal P_n\) and interpret the row-wise data law as \(\Q_n^\infty\); we suppress this dependence on \(n\) to simplify notation.

Hereafter, define the bootstrap rate term
\[
a_n
:=
n^{-1/2}\E\Big[\max_{1\le i\le n} W_i\Big],
\]
which is known to converge to zero under Assumption~\ref{asp: BTWeights} \citep[Lemma 4.7]{praestgaard1993exchangeably}.
We also define the bootstrap empirical process operator, acting on any measurable function \(f\), by
\[
\bG_n^*[f]
:=
\frac{1}{\sqrt{n}}\sum_{i=1}^n (W_i-1)f(X_i).
\]

The main result of this paper is the following theorem, which shows that bootstrap consistency holds under exactly the same conditions as those required in Theorem~\ref{thm:DML,Gaussian}. This result can then be leveraged to offer alternative inferential procedures to many methods that are built over the DML framework \citep{lin2023estimation,lin2022regression}.

\begin{theorem}[Bootstrap distribution consistency for DML estimators]
\label{thm:BT,DML,consistency}
Suppose Assumptions~\ref{asp: BT,DML,data}--\ref{asp: BT,DML,2} hold for the original DML estimator, and that the bootstrap weights satisfy Assumption~\ref{asp: BTWeights}. Then the bootstrapped DML estimator satisfies
\begin{align*}
    \sqrt{n}(\check{\theta}_0^*-\check{\theta}_0)
    =
    \bG_n^*[\bar{\psi}_0(\cdot)]
    +
    O_{\P_{XW}}^o(\rho_n^*) \yestag\label{eq:DML,BTGn}
\end{align*}
uniformly over \(\P_X \in \mathcal{P}_n\), where
\begin{align*}
    \rho_n^*
    &:=
    n^{-1/2+1/q}\log n
    +
    r_n' \log^{1/2}(1/r_n')
    +
    n^{1/2}\lambda_n
    +
    n^{1/2}\lambda_n'
    +
    n^{1/2}\epsilon_n
    +
    a_n^{(q-2)/(3q-2)}\log(1/a_n) \\
    & =  o(1).
\end{align*}
Consequently, 
we have, uniformly over \(\P_X \in \cP_n\),
\begin{align*}
    \sup_{t \in \bR^{d_\theta}}
    \Big|
    \P_{W|X}\Big(\sqrt{n}c^{-1}(\check{\theta}_0^*-\check{\theta}_0)\le t\Big)
    -
    \P\big(N(0,\Sigma^2)\le t\big)
    \Big|
    &=
    o_{\P_X}^o(1), \yestag\label{eq:DML,BTconsistency1} \\
    \sup_{t \in \bR^{d_\theta}}
    \Big|
    \P_{W|X}\Big(\sqrt{n}c^{-1}(\check{\theta}_0^*-\check{\theta}_0)\le t\Big)
    -
    \P\Big(\sqrt{n}(\check{\theta}_0-\theta_0)\le t\Big)
    \Big|
    &=
    o_{\P_X}^o(1), \yestag\label{eq:DML,BTconsistency2}
\end{align*}
where \(c^2\) is the limit appearing in Assumption~\ref{asp: BTWeights}(v).
\end{theorem}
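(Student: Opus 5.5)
The plan is to mirror the proof of Theorem~\ref{thm:DML,Gaussian} fold by fold, with the weighted measure $\BtPk$ in place of $\Pk$. We first establish a bootstrap linear expansion for each $\check\theta^*_{0,k}$ and then subtract the original expansion of $\check\theta_{0,k}$.

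Throughout, we work conditionally on the event $\{\hat\eta_{0,k}\in\mathcal T_n,\ k\in\zahl{K}\}$, which has probability at least $1-K\Delta_n$. On this event we write, for $f$ ranging over $\mathcal F_{1,\hat\eta_{0,k}}$,
\[
\BtPk[f]-\P[f]=(\BtPk-\Pk)[f]+(\Pk-\P)[f].
\]
Because $\hat\eta_{0,k}$ depends only on $\{X_i\}_{i\in I_k^c}$, conditional on those observations the class $\mathcal F_{1,\hat\eta_{0,k}}$ is fixed. The fold observations $\{X_i\}_{i\in I_k}$ remain i.i.d., and the weights restricted to $I_k$ are still exchangeable and independent of the data.

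The key new ingredient is a maximal inequality for the fold-restricted multiplier process $\sqrt m(\BtPk-\Pk)[f]=m^{-1/2}\sum_{i\in I_k}(W_i-1)f(X_i)$. It must hold over the VC-type class with envelope in $L^q$, and over the localized class $\{\psi(\cdot;\theta,\eta)-\psi(\cdot;\theta_0,\eta_0):\|\theta-\theta_0\|\le\tau_n\}$ with $L^2$ radius $r_n'$. We plan to obtain it as follows:
\begin{enumerate}[label=(\alph*)]
\item Condition on the data and use the exchangeable-weight symmetrization and multiplier inequality of \citet{praestgaard1993exchangeably}, in the spirit of Lemma~3.6.7 of van der Vaart--Wellner.
\item Reduce the weighted sum to Rademacher-type processes over the largest $\ell$ weights plus a remainder. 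The remainder is controlled by $\E\max_i W_i/\sqrt n=a_n$ together with a truncation of the envelope at level $M$.
\item Balance the truncation level via Hölder with $\|F\|_{\P,q}\le c_1$. Balancing $a_n M$, $M^{-(q-2)/2}$ and the entropy factor is what we expect to produce the extra term $a_n^{(q-2)/(3q-2)}\log(1/a_n)$ in $\rho_n^*$.
\item Apply the Chernozhukov--Chetverikov--Kato maximal inequality for VC classes to the Rademacher pieces. This reproduces the terms $n^{-1/2+1/q}\log n$ and $r_n'\log^{1/2}(1/r_n')$.
\end{enumerate}
This step is the main obstacle, because the weights are exchangeable rather than independent, $\sum_{i\in I_k}W_i\neq m$, and the bounds must be made uniform in $\P\in\cP_n$ with outer expectations.

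With the maximal inequality in hand, the remaining steps follow the Chernozhukov et al.\ argument.
\begin{enumerate}[label=(\roman*)]
\item \emph{Consistency and rate.} The identification condition Assumption~\ref{asp: BT,DML,1}(iii) and the bound $r_n$ give
\[
\|J_0(\check\theta^*_{0,k}-\theta_0)\|\wedge c_0\lesssim \sup_\theta\|\BtPk\psi-\P\psi\|+r_n+\epsilon_n,
\]
whose right side is $O^o_{\P_{XW}}(n^{-1/2}\log n)$. Hence $\|\check\theta^*_{0,k}-\theta_0\|\le\tau_n$ with probability tending to one, using $\tau_n\ge n^{-1/2}\log n$ up to constants.
\item \emph{Linearization.} On this event, a second-order Taylor expansion in $r$, combined with Neyman orthogonality ($\lambda_n$), the curvature bound ($\lambda_n'$), and the localized maximal inequality, gives
\[
\BtPk\psi(\cdot;\theta,\hat\eta_{0,k})=\BtPk\psi(\cdot;\theta_0,\eta_0)+J_0(\theta-\theta_0)+\text{remainder},
\]
with remainder of order $n^{-1/2}\rho^*_n$.
\item \emph{Near-zero of the empirical moment.} Evaluating at $\bar\theta=\theta_0-J_0^{-1}\BtPk\psi(\cdot;\theta_0,\eta_0)$, which lies in $\Theta$ by Assumption~\ref{asp: BT,DML,1}(i), shows that the infimum in \eqref{eq:boots-est} is $O(n^{-1/2}\rho_n^*)$. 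Inverting the linearization then yields
\[
\check\theta^*_{0,k}-\theta_0=\BtPk\bar\psi_0+O^o(n^{-1/2}\rho_n^*).
\]
\item \emph{Averaging and subtraction.} Average over $k$ and subtract the corresponding expansion $\check\theta_{0,k}-\theta_0=\Pk\bar\psi_0+O(n^{-1/2}\rho_n)$ from Theorem~\ref{thm:DML,Gaussian}. Since $K^{-1}\sum_k(\BtPk-\Pk)=n^{-1}\sum_i(W_i-1)\delta_{X_i}$, this gives exactly \eqref{eq:DML,BTGn}.
\end{enumerate}

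For the distributional statements, we apply the conditional multiplier CLT for exchangeable weights (Theorem~2.1 of \citet{praestgaard1993exchangeably}) to the single fixed, square-integrable function $\bar\psi_0$. Together with Assumption~\ref{asp: BTWeights}(v), this gives $\bG_n^*[\bar\psi_0]\rightsquigarrow N(0,c^2\Sigma^2)$ conditionally on $X$ in outer probability. Uniformity over $\cP_n$ follows from the uniform $L^q$ bound on the envelope, $q>2$, by a Lindeberg-type argument.

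Next we convert the unconditional remainder $O^o_{\P_{XW}}(\rho_n^*)$ into a conditional one. By Markov's inequality, $\P_{W|X}(|R|>\epsilon)=o^o_{\P_X}(1)$. Continuity of the Gaussian limit, via Pólya's theorem, then upgrades weak convergence to the uniform-in-$t$ statement \eqref{eq:DML,BTconsistency1}. Finally, \eqref{eq:DML,BTconsistency2} follows from \eqref{eq:DML,BTconsistency1} and Theorem~\ref{thm:DML,Gaussian} by the triangle inequality.
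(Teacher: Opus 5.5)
Your overall architecture is the paper's: a preliminary $\tau_n$-rate, linearization using $\lambda_n$ and $\lambda_n'$, evaluation at the one-step point $\theta_0-J_0^{-1}\BtPk[\psi(\cdot;\theta_0,\eta_0)]$, subtraction of the original expansion, a H\'ajek-type conditional CLT, and a Markov/P\'olya transfer. The gap is in the step you flag yourself: bounding the localized bootstrap process $\sup_{\|\theta-\theta_0\|\le\tau_n}\|\BtGk[\psi(\cdot;\theta,\hat\eta_{0,k})-\psi(\cdot;\theta_0,\eta_0)]\|$ (the paper's $T_7$). Your sketch gets its mechanism wrong.

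In the exchangeable multiplier inequality (Lemma~\ref{lemma:MultIneq}), the $n_0$ \emph{largest} weights form the piece bounded by $n_0\,\E\|Z_1\|\,a_n$. The remaining weights produce, through $S_n$, the partial-sum term $\max_{n_0<i\le m}\E\|i^{-1/2}\sum_{j=n_0+1}^{i}Z_j\|$. Your item (b) reverses these roles. For the localized class only the envelope bound $\E\|Z_1\|\lesssim 1$ is available. The CCK bound for a partial sum of length $s$ is $r_n'\log^{1/2}(1/r_n')+s^{1/q-1/2}\log s$, which is of order one for small $s$. So with $n_0$ bounded, this route only gives $T_7=O(1)$, and the bootstrap linear representation would carry a non-vanishing remainder. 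Truncating the envelope does not change this.

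The missing idea is to let $n_0=m_0(n)\to\infty$. The factor $\sqrt{s/(s+m_0)}$ then caps the partial-sum term at $r_n'\log^{1/2}(1/r_n')+m_0^{1/q-1/2}\log m_0$. Balancing this against $m_0a_n$ with $m_0\asymp a_n^{-2q/(3q-2)}$ is exactly what produces $a_n^{(q-2)/(3q-2)}\log(1/a_n)$. Your two-way balance $a_nM=M^{-(q-2)/2}$ would instead give the exponent $(q-2)/q$, and nothing in your sketch supports it. If you track both $\ell$ and $M$, the optimal truncation $M\asymp\ell^{1/q}$ merely reproduces $\ell^{1/q-1/2}$, i.e.\ the paper's trade-off.

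Two smaller items. First, the obstacle $\sum_{i\in I_k}W_i\neq m$ is resolved in the paper by splitting $\BtGk[f]=\BtGko[f]+\Delta_{n,k}\P[f]$ with $\Delta_{n,k}=m^{-1/2}\sum_{i\in I_k}(W_i-1)$.
\begin{itemize}
\item Exchangeability and $\mathrm{Var}(\sum_iW_i)=0$ give $\E\Delta_{n,k}^2=\frac{n-m}{n-1}\mathrm{Var}(W_1)$.
\item This is bounded by the uniform square integrability implied by Assumption~\ref{asp: BTWeights}(iii)--(iv).
\item The centered part $\BtGko$ is symmetrized with an independent copy of $W$ before Lemma~\ref{lemma:MultIneq} is applied.
\end{itemize}

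Second, in your step (i), an $O^o_{\P_{XW}}(n^{-1/2}\log n)$ bound does not give $\|\check\theta^*_{0,k}-\theta_0\|\le\tau_n$ with probability tending to one, because $\tau_n$ may equal $n^{-1/2}\log n$. You need $o(\tau_n)$. This does hold: the unlocalized suprema have bounded expectation after scaling by $\sqrt m$ (fixed $n_0$ suffices there), hence are $O_{\P}(n^{-1/2})$, and $n^{-1/2}\log\log n=o(\tau_n)$.
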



In the theorem above, the effect of bootstrap resampling on the distributional approximation is captured by the term \(a_n\). This rate is bootstrap-scheme dependent and is, in general, larger than \(n^{-1/2}\). We close this section with some discussions on this term.

\begin{proposition}[Distributional approximation rates for different bootstrap weights]
\label{Prop:BootstrapRate}
Suppose the bootstrap weights \(W\) satisfy Assumption~\ref{asp: BTWeights}. Then, for any \(x>0\) possibly depending on \(n\), the quantity \(a_n\) satisfies
\begin{align}\label{eq:generate-rate-bootstrap}
    \frac{1}{\sqrt{n}}
    \le
    a_n
    \le
    \frac{x}{\sqrt{n}}
    +
    \frac{\sqrt{n}}{x}
    \Big[\sup_{t \ge x} t^2 \P(W_1 \ge t)\Big].
\end{align}
In particular,
\begin{enumerate}[itemsep=-.5ex, label=(\alph*)]
    \item if \(W_1 \le B_n\) almost surely for some constant \(B_n\) possibly depending on \(n\), then  $a_n \le B_n/\sqrt{n}$;

    \item if \(\P(W_1 \ge t) \lesssim \exp(-bt)\) for some \(b>0\), then  $a_n \lesssim \log n/\sqrt{n}$;

    \item if \(\P(W_1 \ge t) \lesssim t^{-\beta}\) for some \(\beta>2\), then $a_n \lesssim n^{1/\beta - 1/2}$.
\end{enumerate}

In particular, for the bootstrap schemes in Example~\ref{exp: BT, class}, we have:
\begin{enumerate}[itemsep=-.5ex, label=(\roman*)]
    \item for Efron's bootstrap,
    \begin{align*}
        a_n
        =
        (1+o(1))\frac{1}{\sqrt{n}}\frac{\log n}{\log\log n};
    \end{align*}

    \item for normalized multiplier bootstrap with Gamma weights, including the Bayesian bootstrap, as well as for the double bootstrap, we have $a_n \lesssim \log n/\sqrt{n}$;

    \item the optimal rate \(n^{-1/2}\) for \(a_n\) can be attained by some suitably constructed bootstrap weights.
\end{enumerate}
\end{proposition}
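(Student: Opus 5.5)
The plan is to start from the general two-sided bound \eqref{eq:generate-rate-bootstrap} and then specialize it.

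\emph{Lower bound.} Since $\sum_i W_i = n$, we have $\max_i W_i \ge 1$ pointwise. Hence $\E[\max_i W_i]\ge 1$, which gives $a_n \ge n^{-1/2}$.

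\emph{Upper bound.} We use a layer-cake decomposition at the level $x$:
\[
\E\Big[\max_i W_i\Big] \le x + \int_x^\infty \P\Big(\max_i W_i \ge t\Big)\,\d t \le x + n\int_x^\infty \P(W_1\ge t)\,\d t,
\]
where the second step is a union bound together with exchangeability. Write $S_x:=\sup_{t\ge x} t^2\P(W_1\ge t)$. Then $\P(W_1\ge t)\le S_x t^{-2}$ for $t\ge x$, so
\[
\int_x^\infty \P(W_1\ge t)\,\d t \le S_x/x .
\]
Dividing through by $\sqrt n$ gives \eqref{eq:generate-rate-bootstrap}.

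\emph{Cases (a)--(c).}
\begin{enumerate}[label=(\alph*)]
\item Take $x=B_n$. Then $S_x=0$, and the bound reduces to $a_n\le B_n/\sqrt n$.
\item The displayed bound loses a factor here, so it is cleaner to use the layer-cake inequality directly. Choose $x=(2/b)\log n$. Then $n\int_x^\infty e^{-bt}\,\d t\lesssim n\cdot n^{-2}$, which is negligible, so $a_n\lesssim \log n/\sqrt n$.
\item Here $S_x\lesssim x^{2-\beta}$. Taking $x=n^{1/\beta}$ balances the two terms $x/\sqrt n$ and $\sqrt n\, x^{1-\beta}$, and gives $a_n\lesssim n^{1/\beta-1/2}$.
\end{enumerate}

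\emph{Efron's bootstrap, item (i).} For the multinomial the coordinates are not independent, so I would use Poissonization or the classical balls-in-bins result of Gonnet, Raab--Steger and Kolchin et al.: the maximal load of $n$ balls in $n$ bins equals $(1+o(1))\log n/\log\log n$ in probability. Convergence of the expectation then follows from uniform integrability, which comes from the tail bound
\[
\P(W_1\ge t)\le \binom{n}{t}n^{-t}\le 1/t!
\]
combined with a union bound.

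\emph{Gamma and double bootstrap, item (ii).} For Gamma multiplier weights, $W_i=Y_i/\bar Y$. I would bound $\E[\max_i Y_i]\lesssim\log n$ by exponential tails, and control $1/\bar Y$ on the event $\{\bar Y\ge 1/2\}$. The complementary event has exponentially small probability, and on it $\max_i W_i\le n$, so its contribution is negligible. For the double bootstrap, $\P(W_1\ge t)$ has a mixed-Poisson-type tail that is still exponential. I would either verify the tail bound in (b) directly, or nest the Efron argument: conditionally on $M$, $\max_i W_i$ is a multinomial maximum with probabilities at most $\max_j M_j/n$.

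\emph{Optimal rate, item (iii).} Take bounded weights, for instance $W$ uniform over permutations of a fixed vector with half its entries equal to $1-c$ and half equal to $1+c$ (with $n$ even and $0<c<1$). Assumption~\ref{asp: BTWeights} is immediate for this choice, and (a) gives $a_n\le (1+c)/\sqrt n$.

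The main obstacle is the sharp constant $1+o(1)$ for Efron's bootstrap. I would cite the classical maximal-occupancy asymptotics rather than reprove them, and carefully supply only the uniform-integrability step needed to pass from convergence in probability to convergence of the expectation.
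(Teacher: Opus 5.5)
Your proposal is correct. For the two-sided bound \eqref{eq:generate-rate-bootstrap}, cases (a)--(c), and the Gamma-multiplier part of (ii), it coincides with the paper's argument: a layer-cake split at $x$ with a union bound via exchangeability, and $\E[\max_i W_i]\le (2/\mu)\E[\max_i Y_i]+n\P(\bar Y<\mu/2)$.

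\textbf{Item (i), Efron's bootstrap.} This is where the routes genuinely differ. Let $L_n:=\log n/\log\log n$. The paper proves both halves from scratch:
\begin{itemize}
\item for the upper half, a Chernoff bound $\P(W_1\ge m)\le e^{-1}(e/m)^m$, summed geometrically above $(1+\epsilon)L_n$;
\item for the lower half, the Paley--Zygmund inequality applied to $N_m:=\sum_i \ind(W_i=m)$ at $m=\lfloor (1-\epsilon)L_n\rfloor$, with explicit first and second moments.
\end{itemize}
You instead import the in-probability max-load asymptotics and supply only uniform integrability. Your bound $\P(\max_i W_i\ge t)\le n/\lceil t\rceil!$ already gives the upper half of $\E[\max_i W_i]$ directly; up to Stirling it is the paper's Chernoff step. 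So the citation is really only needed for $\P(\max_i W_i\ge(1-\epsilon)L_n)\to 1$, which is exactly what the paper's second-moment computation establishes. Your route is shorter; the paper's is self-contained.

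\textbf{Double bootstrap.} You only assert the exponential tail. The paper verifies it by conditioning on $M_1$ to get $\E[e^{\lambda W_1}]\le \exp\{e^{e^\lambda-1}-1\}$, and that is the computation your first option would need.

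\textbf{Item (iii).} Your balanced two-point permutation weights and the paper's grouped delete-$h$ jackknife are the same kind of example: a uniformly permuted fixed bounded vector. The delete-half jackknife is your vector with $c=1$. Yours has the mild advantage that Assumption~\ref{asp: BTWeights} can be checked by inspection rather than via Example 3.6 of Praestgaard and Wellner.

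\textbf{Two small slips.}
\begin{itemize}
\item In (a), taking $x=B_n$ does not make $\sup_{t\ge x}t^2\P(W_1\ge t)$ vanish when $\P(W_1=B_n)>0$, as happens in your own (iii) example. Let $x\downarrow B_n$ instead, or simply use $\max_i W_i\le B_n$.
\item In (b), the displayed bound does not lose a factor. With $x=b^{-1}\log n$, its second term is $\lesssim \sqrt n\,x\,e^{-bx}=\log n/(b\sqrt n)$.
\end{itemize}
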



\section{Technical details}\label{sec:proof}

\paragraph*{Additional notation.} For presenting the technical details, we first introduce some additional notation. Given any positive integer $n$, we write $n!$ as the factorial of $n$. The notation $\ind(\cdot)$ denotes the indicator function. We use $\stackrel{\sf a.s.}{\longrightarrow}$ and $\stackrel{\sf p}{\longrightarrow}$ to denote almost sure convergence and convergence in probability, respectively. For a particular partition set $I_k$, we use $\BtGk$ to denote the bootstrap empirical measure operator such that 
\[
\BtGk[f] = \sqrt{m}(\BtPk - \Pk)[f] = \frac{1}{\sqrt{m}}\sum_{i \in I_k}(W_{i}-1)f(X_i).
\]

\begin{proof}[Proof of Theorem~\ref{thm:BT,DML,consistency}]

We prove this theorem in six steps. For establishing \eqref{eq:DML,BTGn}, by the definition of $\check{\theta}_0$ and $\check{\theta}_0^*$, we  will show the general arguments for establishing
\begin{align*}
\sqrt{m} (\check{\theta}_{0,k}^* - \check{\theta}_{0,k}) =  - J_0^{-1} \BtGk \big[ \psi(\cdot; {\theta}_{0}, {\eta}_{0}) \big] + O_{\P_{XW}}^o(\rho_n^*) \qquad \text{for all } k\in\zahl{K}
\end{align*}
in Step I-II, and then use Steps III-V to derive the bounds used in Step I-II. We prove the distributional convergence results in \eqref{eq:DML,BTconsistency1} in Step VI; and \eqref{eq:DML,BTconsistency2} follows easily from Theorem~\ref{thm:DML,Gaussian}.
For the most parts of our proof, we implicitly condition on $\{X_i\}_{i \in I_k^c}$ so that $\hat{\eta}_{0,k}$ can be treated as fixed.

To verify the asserted uniformity, fix any sequence $\{\Q_n\}_{n\ge1}$ such that $\Q_n \in \cP_n$ for all $n$. For each $n$, we interpret the generic data-generating law $\P_X$ as $\Q_n$. All stochastic orders and all law-dependent objects (for example, $\theta_0,\eta_0,J_0,\Sigma^2,\bar\psi_0$) are understood as those
associated with $\Q_n$. We suppress this dependence on $n$. Since the sequence $\{\Q_n\}_{n\ge1}$ is arbitrary, the row-wise statements proved below hold uniformly over $\P_X \in \cP_n$ by Lemma~\ref{lemma:UniformSequenceSO}. When subsequence arguments are used in Step VI, we view the resulting triangular array on a common row-wise product space.

\vspace{0.2cm}

{\bf Step I.} First we derive the preliminary rate for the bootstrapped estimator $\check{\theta}_{0,k}^*$. That is, in this step we show that, with $\P_{XW}^o$-probability $1-o(1)$, one has
\begin{align*}
    \big\lVert \check{\theta}_{0,k}^* - {\theta}_0 \big\rVert \le \tau_n. 
    \yestag\label{eq:DML,S1,Goal}
\end{align*}
Observe that, by the approximate $\epsilon_n$-solution condition, we have
\begin{align*}
    \big\lVert \BtPk [\psi(\cdot; \check{\theta}_{0,k}^*, \hat{\eta}_{0,k})] \big\rVert \le \big\lVert \BtPk [\psi(\cdot; \theta_0, \hat{\eta}_{0,k})] \big\rVert + \epsilon_n, ~~~ \epsilon_n = o(\delta_n n^{-1/2}).
\end{align*}
Applying the triangle inequality to the lefthand side, we have 
\begin{align*}
    \big\lVert \P [\psi(\cdot; \check{\theta}_{0,k}^*, {\eta}_{0})] \big\rVert &\le \big\lVert \P [\psi(\cdot; \check{\theta}_{0,k}^*, {\eta}_{0}) - \psi(\cdot; \check{\theta}_{0,k}^*, \hat{\eta}_{0,k})] \big\rVert + \big\lVert (\P-\Pk) [\psi(\cdot; \check{\theta}_{0,k}^*, \hat{\eta}_{0,k})] \big\rVert \\
    &~~+ \big\lVert (\Pk - \BtPk) [\psi(\cdot; \check{\theta}_{0,k}^*, \hat{\eta}_{0,k})] \big\rVert + \big\lVert \BtPk [\psi(\cdot; \check{\theta}_{0,k}^*, \hat{\eta}_{0,k})] \big\rVert
\end{align*}
along with
\begin{align*}
    \big\lVert \BtPk [\psi(\cdot; \theta_0, \hat{\eta}_{0,k})] \big\rVert &\le \big\lVert (\BtPk - \Pk) [\psi(\cdot; {\theta}_{0}, \hat{\eta}_{0,k})] \big\rVert + \big\lVert ( \Pk - \P) [\psi(\cdot; {\theta}_{0}, \hat{\eta}_{0,k})] \big\rVert \\
    & ~~+ \big\lVert \P [\psi(\cdot; {\theta}_{0}, \hat{\eta}_{0,k}) - \psi(\cdot; {\theta}_{0}, {\eta}_{0})] \big\rVert + \big\lVert \P [\psi(\cdot; {\theta}_{0}, {\eta}_{0})] \big\rVert
\end{align*}
on the righthand side. Combine the terms and note the moment condition yields
\begin{align*}
    \big\lVert \P [\psi(\cdot; \check{\theta}_{0,k}^*, {\eta}_{0})] \big\rVert &\le 2T_1 + 2T_2 + 2T_3 + \epsilon_n,
\end{align*}
where
\begin{align*}
    T_1 &:= \sup_{\theta \in \Theta, \eta \in \mathcal{T}_n} \big\lVert \P [\psi(\cdot; {\theta}, {\eta}) - \psi(\cdot; {\theta}, {\eta}_{0})] \big\rVert, \\ 
    T_2 &:= m^{-1/2} \sup_{\theta \in \Theta}  \big\lVert \Gk[\psi(\cdot; {\theta}, \hat{\eta}_{0,k})] \big\rVert, \\
    T_3 &:= m^{-1/2} \sup_{\theta \in \Theta}  \big\lVert \BtGk[\psi(\cdot; {\theta}, \hat{\eta}_{0,k})] \big\rVert.
\end{align*}
Here, by Assumption~\ref{asp: BT,DML,2}(iii), one has 
\[
T_1 = r_n \le \delta_n \tau_n = o(\tau_n). 
\]
Moreover, conditioning on $\{X_i\}_{i \in I_k^c}$, the nuisance estimator $\hat{\eta}_{0,k}$ can be treated as fixed. Lemma~\ref{lemma:MaximalIneq} then yields, with $\P_{X}$-probability $1-o(1)$,
\begin{align*}
    T_2 \lesssim n^{-1/2}(1+n^{-1/2+1/q}\log n) \lesssim n^{-1/2} = o(\tau_n).
\end{align*}
In Step III below, we will show that 
\[
T_3 = o(\tau_n)~~ \text{ with $\P_{XW}^o$-probability goes to $1$.}
\]
Since $\epsilon_n = o(\delta_n n^{-1/2}) = o(\tau_n)$ as $n^{-1/2}\log n \le \tau_n$, by Assumption \ref{asp: BT,DML,1}(iii) we conclude that, with $\P_{XW}^o$-probability $1-o(1)$,
\begin{align*}
    \big\lVert J_0 (\check{\theta}_{0,k}^*- \theta_0) \big\rVert \wedge c_0 \le 2 \big\lVert \P [\psi_{ }(\cdot; \check{\theta}_{0,k}^*, \eta_0)] \big\rVert = o(\tau_n).
\end{align*}
Since the singular values of $J_0$ is bounded away from zero, we then have proved \eqref{eq:DML,S1,Goal} with the same probability. Combined with the proof of Theorem~\ref{thm:DML,Gaussian} in \cite{chernozhukov2018double}, which gives a similar treatment to $\check{\theta}_{0,k}$, one has the following event holds in $\P_{XW}^o$-probability goes to $1$:
\begin{align*}
    \Big\{ 
    \hat{\eta}_{0,k} \in \cT_n,
    ~\big\lVert \check{\theta}_{0,k} - {\theta}_0 \big\rVert \vee \big\lVert \check{\theta}_{0,k}^* - {\theta}_0 \big\rVert \le \tau_n \Big\};
\end{align*}
and we will work on this event in the subsequent steps.

\vspace{0.2cm}

{\bf Step II.} Here we prove the bootstrap consistency by a linearization argument. First note that, for any $\theta \in \Theta$ and $\eta \in \cT_n$, by Taylor's expansion of the function $$r \mapsto \P\big[ \psi (\cdot; \theta_0 + r(\theta-\theta_0), \eta_0 + r(\eta- \eta_0) )\big],$$ one has 
\begin{align*}
    \P\big[ \psi (\cdot; \theta, \eta ) \big] &= J_0(\theta-\theta_0) + \partial_\eta \P \big[\psi(\cdot;\theta_0,\eta_0)\big][\eta- \eta_0] \\
    & ~~+ \int_0^1 (1-r) \partial_r^2 \P\big[ \psi \big(\cdot; \theta_0 + r(\theta-\theta_0), \eta_0 + r(\eta- \eta_0) \big) \big] \d r \yestag \label{eq:DML,S2,lin}
\end{align*}
as $\P[ \psi(\cdot; \theta_0 + r(\theta-\theta_0), \eta_0 + r(\eta- \eta_0) )]$ vanishes at $r=0$. Apply this equality to both $(\check{\theta}_{0,k}, \hat{\eta}_{0,k})$ and $(\check{\theta}_{0,k}^*, \hat{\eta}_{0,k})$, respectively, we note that by Neyman $\lambda_n$-orthogonality, 
\begin{align*}
    \big\lVert \partial_\eta \P[ \psi (\cdot; \theta_0, \eta_0)] [\hat{\eta}_{0,k} - \eta_0] \big\rVert \le \lambda_n.
\end{align*}
At the same time, by simple algebra we can write
\begin{align*}
    & \sqrt{m} \P\big[ \psi(\cdot; \check{\theta}_{0,k}^*, \hat{\eta}_{0,k}) - \psi(\cdot; \check{\theta}_{0,k}, \hat{\eta}_{0,k}) \big] + \BtGk \big[ \psi(\cdot; {\theta}_{0}, {\eta}_{0}) \big] \\
    &= - \BtGk \big[ \psi(\cdot; \check{\theta}_{0,k}^*, \hat{\eta}_{0,k}) - \psi(\cdot; {\theta}_{0}, {\eta}_{0}) \big] + \Gk \big[ \psi(\cdot; \check{\theta}_{0,k}, \hat{\eta}_{0,k}) - \psi(\cdot; {\theta}_{0}, {\eta}_{0}) \big] \\
    & ~~+ \Gk \big[ \psi(\cdot; {\theta}_{0}, {\eta}_{0}) - \psi(\cdot; \check{\theta}_{0,k}^*, \hat{\eta}_{0,k}) \big] 
    + \sqrt{m} \BtPk \big[\psi(\cdot; \check{\theta}_{0,k}^*, \hat{\eta}_{0,k}) \big] 
    - \sqrt{m} \Pk \big[\psi(\cdot; \check{\theta}_{0,k}, \hat{\eta}_{0,k}) \big], \yestag \label{eq:DML,S2,emp}
\end{align*}
and by the approximate $\epsilon_n$-solution condition, for the last two terms in \eqref{eq:DML,S2,emp} one has
\begin{align*}
    \lVert \Pk [\psi(X; \check{\theta}_{0,k}, \hat{\eta}_{0,k})] \rVert & \le \inf_{\theta \in \Theta} \lVert \Pk [\psi(X; \theta, \hat{\eta}_{0,k})] \rVert + \epsilon_n, \\
    \lVert \BtPk [\psi(X; \check{\theta}_{0,k}^*, \hat{\eta}_{0,k})] \rVert & \le \inf_{\theta \in \Theta} \lVert \BtPk [\psi(X; \theta, \hat{\eta}_{0,k})] \rVert + \epsilon_n.
\end{align*}
Combining \eqref{eq:DML,S2,lin} and \eqref{eq:DML,S2,emp} and arranging the terms, we then obtain, with probability $1-o(1)$,
\begin{align*}
    \sqrt{m} \big\lVert J_0\big( \check{\theta}_{0,k}^* - \check{\theta}_{0,k} \big) + \BtGk \big[ \psi(\cdot; {\theta}_{0}, {\eta}_{0}) \big] \big\rVert \le T_4 + T_5 + 2T_6 + T_7 + 2T_8 + 2\sqrt{m}\lambda_n + 2\sqrt{m}\epsilon_n,
\end{align*}
where
\begin{align*}
    T_4 &:= \inf_{\theta \in \Theta} \sqrt{m} \lVert \Pk [\psi(X; \theta, \hat{\eta}_{0,k})] \rVert,\\ 
    T_5 &:= \inf_{\theta \in \Theta} \sqrt{m} \lVert \BtPk [\psi(X; \theta, \hat{\eta}_{0,k})] \rVert, \\
    T_6 &:= \sup_{\theta: \lVert \theta - \theta_0 \rVert \leq \tau_n} \big\lVert \Gk \big[ \psi(\cdot; {\theta}, \hat{\eta}_{0,k}) - \psi(\cdot; {\theta}_{0}, {\eta}_{0}) \big] \big\rVert,  \\
    T_7 &:= \sup_{\theta: \lVert \theta - \theta_0 \rVert \leq \tau_n} \big\lVert \BtGk \big[ \psi(\cdot; {\theta}, \hat{\eta}_{0,k}) - \psi(\cdot; {\theta}_{0}, {\eta}_{0}) \big] \big\rVert,
\end{align*}
and 
\begin{align*}
    T_8 &:= \sup_{\theta: \|\theta - \theta_0\| \le \tau_n} \sup_{\eta\in\mathcal{T}_n} \sqrt{m} \Big\lVert \int_0^1(1-r) \partial_r^2 \P\big[ \psi \big(\cdot; \theta_0 + r(\theta-\theta_0), \eta_0 + r(\eta- \eta_0) \big) \big] \d r \Big\rVert. 
\end{align*}
In the following steps, we will establish bounds on the rates for each of these terms. In particular, in Step IV we will show that, with probability $1-o(1)$,
\begin{align*}
    T_6 \le r_n' \log^{1/2}(1/r_n') + n^{1/q-1/2} \log n \yestag \label{eq:DML,S2,T6eq}
\end{align*}
and
\begin{align*}
    T_7 = O_{\P_{XW}}^o(r_n' \log^{1/2}(1/r_n') + a_n^{(q-2)/(3q-2)}\log(1/a_n)). \yestag\label{eq:DML,S2,T7eq}
\end{align*}
Moreover, by Assumption \ref{asp: BT,DML,2}(iii), we have 
\[
T_8 \le \sqrt{n}\lambda_n', 
\]
and finally, we will show
\begin{align*}
    T_4 &\le \sqrt{n} \lambda_n + T_8 + T_6, \yestag  \label{eq:DML,S2,T4bd}\\
    T_5 &\le \sqrt{n} \lambda_n + T_8 + T_7 + T_6, \yestag  \label{eq:DML,S2,T5bd}
\end{align*}
with probability $1-o(1)$ in Step V. Combining \eqref{eq:DML,S2,T6eq}-\eqref{eq:DML,S2,T5bd}, we then obtain
\begin{align*}
    \sqrt{m} (\check{\theta}_{0,k}^* - \check{\theta}_{0,k}) =  - J_0^{-1} \BtGk \big[ \psi(\cdot; {\theta}_{0}, {\eta}_{0}) \big] + O_{\P_{XW}}^o(\rho_n^*),
\end{align*}
where
\begin{align*}
    \rho_n^* := n^{-1/2+1/q}\log n + r'_n \log^{1/2} (1/r'_n)  + n^{1/2} (\lambda_n + \lambda'_n + \epsilon_n) + a_n^{(q-2)/(3q-2)}\log(1/a_n) = o(1).
\end{align*}

\vspace{0.2cm}

{\bf Step III.} Here we derive the rate for $T_3$. To prove 
\[
T_3 = o(\tau_n) ~~\text{ with $\P_{XW}^o$-probability $1-o(1)$,}
\]
it suffices to show 
\[
T_3 = o_{\P_{XW}}^o(\tau_n). 
\]
We illustrate this by writing $T_3$  as the summation of a term of order $o(\tau_n)$ with high probability and a term of order $O_{\P_{XW}}(n^{-1/2})$. The order of $\tau_n$ then yields the claimed result.

Notice that by Assumption \ref{asp: BTWeights}(ii), for any measurable $f$, we can write
\begin{align*}
    \BtGk[f] &= \frac{1}{\sqrt{m}} \sum_{i\in I_k} (W_i-1)\delta_{X_i}[f] \\
    &= \frac{1}{\sqrt{m}} \sum_{i\in I_k} (W_i-1)(\delta_{X_i} - \P_X)[f] + \frac{1}{\sqrt{m}} \sum_{i\in I_k} (W_i-1) \P_X[f] \\
    &=: \BtGko[f] + \Delta_{n,k}\P_X[f];
\end{align*}
and correspondingly, we can write 
\begin{align*}
    T_3 \le m^{-1/2} \sup_{\theta \in \Theta}  \big\lVert \BtGko[\psi(\cdot; {\theta}, \hat{\eta}_{0,k})] \big\rVert + m^{-1/2} \sup_{\theta \in \Theta}  \big\lVert \Delta_{n,k}\P_X[\psi(\cdot; {\theta}, \hat{\eta}_{0,k})] \big\rVert =: T_{3,c} + T_{3,r} .\yestag \label{eq:DML,S3,T3decom}
\end{align*}
For the first term in \eqref{eq:DML,S3,T3decom}, we show 
\[
T_{3,c} = o(\tau_n)~~\text{ with $\P_{XW}^o$-probability $1-o(1)$.} 
\]
Since by Markov inequality 
\begin{align*}
    \P_{XW}^o \big(T_{3,c} \le n^{-1/2} \log \log (n) \big) \ge 1 - \frac{\sqrt{n} \E_{XW}^o[T_{3,c}]}{ \log \log (n)},
\end{align*}
 and by definition of $\tau_n$ one has $T_{3,c} / \tau_n \le \log \log (n)/\log n = o(1)$ on the same event defined in the probability, it only suffices to show
\begin{align*}
    \limsup_{n \to \infty} \sqrt{m} \E_{XW}^o [ T_{3,c} ] < \infty. \yestag \label{eq:DML,S3,Goal}
\end{align*}
Given an independent bootstrap weights copy $\bar{W}=(\bar W_i)_{i=1}^n$, since by Assumption \ref{asp: BTWeights}(i)(ii) one has $\E_{\bar{W}}[\bar{W}_i]=1$ for all $i \in I_k$,
we can then leverage the symmetrization argument to establish
\begin{align*}
    \sqrt{m} \E_{XW}^o [ T_{3,c} ] &= \E_{XW}^o \Big[  \sup_{\theta \in \Theta}  \big\lVert \BtGko[\psi(\cdot; {\theta}, \hat{\eta}_{0,k})] \big\rVert \Big] \\
    &= \E_{XW}^o \Big[ \sup_{\theta \in \Theta}  \Big\lVert \frac{1}{\sqrt{m}}\sum_{i\in I_k} (W_i-1)(\delta_{X_i} - \P_X)[\psi(\cdot; {\theta}, \hat{\eta}_{0,k})] \Big\rVert \Big] \\
    &= \E_{XW}^o \Big[ \sup_{\theta \in \Theta}  \Big\lVert \frac{1}{\sqrt{m}}\sum_{i\in I_k} \big(W_i-\E_{\bar{W}}[\bar{W}_i]\big)(\delta_{X_i} - \P_X)[\psi(\cdot; {\theta}, \hat{\eta}_{0,k})] \Big\rVert \Big] \\
    & \le \E_{XW}^o \E_{\bar{W}}^o \Big[ \sup_{\theta \in \Theta}  \Big\lVert \frac{1}{\sqrt{m}}\sum_{i\in I_k} \big(W_i-\bar{W}_i'\big)(\delta_{X_i} - \P_X)[\psi(\cdot; {\theta}, \hat{\eta}_{0,k})] \Big\rVert \Big] \\
    & \le \E_{XW}^o \Big[ \sup_{\theta \in \Theta}  \Big\lVert \frac{1}{\sqrt{m}}\sum_{i\in I_k} W_i(\delta_{X_i} - \P_X)[\psi(\cdot; {\theta}, \hat{\eta}_{0,k})] \Big\rVert \Big] \\
    & ~~+ \E_{XW}^o \E_{\bar{W}}^o\Big[ \sup_{\theta \in \Theta}  \Big\lVert \frac{1}{\sqrt{m}}\sum_{i\in I_k} \bar{W}_i(\delta_{X_i} - \P_X)[\psi(\cdot; {\theta}, \hat{\eta}_{0,k})] \Big\rVert \Big] \\
    & = 2\E_{XW}^o \Big[ \sup_{\theta \in \Theta}  \Big\lVert \frac{1}{\sqrt{m}}\sum_{i\in I_k} W_i(\delta_{X_i} - \P_X)[\psi(\cdot; {\theta}, \hat{\eta}_{0,k})] \Big\rVert \Big] \\
    &= 2\E_{XW}^o \Big[ \sup_{\theta \in \Theta}  \Big\lVert \frac{1}{\sqrt{m}}\sum_{i\in I_k} W_i Z_i(\theta) \Big\rVert \Big],
\end{align*}
where we define 
\begin{align*}
    Z_i(\theta) := (\delta_{X_i} - \P_X)[\psi(\cdot; {\theta}, \hat{\eta}_{0,k})]
\end{align*}
for $i \in I_k$. Re-indexing the samples in the fold, by Lemma \ref{lemma:MultIneq} we have, for any fixed $m_0 < \infty$,  
\begin{align*}
    \limsup_{n \to \infty} \E_{XW}^o \Big[ \sup_{\theta \in \Theta}  \Big\lVert \frac{1}{\sqrt{m}}\sum_{i=1}^{m} W_i Z_i(\theta) \Big\rVert \Big] &\le 
    m_0 \limsup_{n \to \infty}  \E_{X}^o \Big[ \sup_{\theta \in \Theta}  \big\lVert Z_1(\theta) \big\rVert \Big] \Big(\frac{\E_{W}\big[ \max_{1\le i \le m} W_i \big]}{\sqrt{m}}\Big) \\
    & ~~+  \limsup_{n \to \infty} \Big\{ S_n  \max_{m_0 \le i \le m} \E_{X}^o \Big[ \sup_{\theta \in \Theta} \Big\lVert \frac{1}{\sqrt{i}} \sum_{j=m_0+1}^i Z_j(\theta) \Big\rVert \Big] \Big\},
\end{align*}
where
\begin{align*}
    S_n := \int_0^\infty \sqrt{\P_W(W_1\ge t)}\d t. 
    \yestag \label{eq:DML,S3,SnDef}
\end{align*}
To show \eqref{eq:DML,S3,Goal}, it then only suffices to show
\begin{align*}
    &\limsup_{n \to \infty} \E_{X}^o \Big[ \sup_{\theta \in \Theta}  \big\lVert Z_1(\theta) \big\rVert \Big] < \infty, \yestag \label{eq:DML,S3,sG1} \\
    &\limsup_{n \to \infty} a_n = \limsup_{n \to \infty}\Big(\frac{\E_{W}\big[ \max_{1\le i \le n} W_i \big]}{\sqrt{n}}\Big) = 0, \yestag \label{eq:DML,S3,sG2}\\
    &\limsup_{n \to \infty} S_n < \infty, \yestag \label{eq:DML,S3,sG3}\\
    &\limsup_{ n \to \infty} \max_{m_0 < i \le m}  \E_{X} \Big[ \sup_{\theta \in \Theta} \Big\lVert \frac{1}{\sqrt{i}} \sum_{j=m_0+1}^i Z_j(\theta) \Big\rVert \Big] < \infty \yestag \label{eq:DML,S3,sG4}.
\end{align*}
To establish \eqref{eq:DML,S3,sG1}, by Assumption \ref{asp: BT,DML,2}(ii), we can write
\begin{align*}
    \limsup_{n \to \infty} \E_{X}^o \Big[ \sup_{\theta \in \Theta}  \big\lVert Z_1(\theta) \big\rVert \Big] &= \limsup_{n \to \infty} \E_{X}^o \Big[ \sup_{\theta \in \Theta}  \big\lVert (\delta_{X_1} - \P_X)[\psi(\cdot; {\theta}, \hat{\eta}_{0,k})] \big\rVert \Big] \\
    & \le \limsup_{n \to \infty} \Big\{ \E_{X}^o \Big[ \sup_{\theta \in \Theta}  \big\lVert \psi(X_1; {\theta}, \hat{\eta}_{0,k}) \big\rVert \Big] + \sup_{\theta \in \Theta}  \big\lVert \P_X [\psi(\cdot; {\theta}, \hat{\eta}_{0,k})] \big\rVert  \Big\} \\
    & \lesssim \limsup_{n \to \infty} 2 \E_{X} \big[F_{1,\hat{\eta}_{0,k}}(X_1)\big]\\
    & \le 2c_1.
\end{align*}

Equation \eqref{eq:DML,S3,sG2} is a standard implication for exchangeable bootstrap weights under Assumptions \ref{asp: BTWeights}(iii)(iv). To see this, either refer to our proof in Step~VI or directly apply Lemma 4.7 in \cite{praestgaard1993exchangeably}. 

Equation~\eqref{eq:DML,S3,sG3} is a direct result of $S_n$ is uniformly bounded for all $n$ by Assumption~\ref{asp: BTWeights}(iii). 

To show \eqref{eq:DML,S3,sG4}, we further re-index 
\[
s:=i-m_0 \text{ for } i>m_0, 
\]
and therefore
\begin{align*}
     \max_{m_0 < i \le m} \E_{X} \Big[ \sup_{\theta \in \Theta} \Big\lVert \frac{1}{\sqrt{i}} \sum_{j=m_0+1}^i Z_j(\theta) \Big\rVert \Big]  
     &=  \max_{0 < s \le m-m_0}  \sqrt{\frac{s}{s+m_0}} \E_{X} \Big[ \sup_{\theta \in \Theta} \Big\lVert \frac{1}{\sqrt{s}} \sum_{j=1}^s Z_j(\theta) \Big\rVert \Big] \\
     & \lesssim  \max_{0 < s \le m-m_0}  \E_{X} \Big[ \lVert \mathbb{G}_s \rVert_{\mathcal{F}_{1,\hat \eta_{0,k}}} \Big],
\end{align*}
where for each $s$, the re-indexed empirical process operator is defined as 
\begin{align*}
    \mathbb{G}_s := \frac{1}{\sqrt{s}} \sum_{j=1}^s (\delta_{X_j} - \P_X). 
    \yestag\label{eq:DML,S3,GsDef}
\end{align*}
Applying Lemma~\ref{lemma:MaximalIneq} with sample size $s$ and noting Assumption~\ref{asp: BT,DML,2}(ii), we then obtain
\begin{align*}
    \max_{m_0 < i \le m} \E_{X} \Big[ \sup_{\theta \in \Theta} \Big\lVert \frac{1}{\sqrt{i}} \sum_{j=m_0+1}^i Z_j(\theta) \Big\rVert \Big] 
    \lesssim  \max_{0 < s \le m-m_0} \big( 1+s^{-1/2+1/q} c_1 \big) \lesssim 1,
\end{align*}
which concludes \eqref{eq:DML,S3,sG4}.

For the second term in \eqref{eq:DML,S3,T3decom}, we aim to show 
\[
T_{3,r} = O_{\P_{XW}}(n^{-1/2}). 
\]
To this end, note that 
\begin{align*}
    T_{3,r} = \sup_{\theta \in \Theta}  \Big\lVert \frac{1}{m} \sum_{i\in I_k} (W_i-1) \P_X[\psi(\cdot; {\theta}, \hat{\eta}_{0,k})]  \Big\rVert \lesssim \Big\lvert \frac{1}{m} \sum_{i\in I_k} (W_i-1)  \Big\rvert \lVert F_{1,\hat \eta_{0,k}}\rVert_{\P,q}.
\end{align*}
Then, from Assumption~\ref{asp: BT,DML,2}(ii), we only have to show $$\frac{1}{m}\sum_{i\in I_k} (W_i-1) = O_{\P}(n^{-1/2}).$$ Observe that the fact
\begin{align*}
    0 = \Var\Big[\sum_{i=1}^n W_i\Big] =  n\Var[ W_i] + n(n-1) \Cov[W_1,W_2]
\end{align*}
yields
\begin{align*}
     \E\Big[ \Big(\frac{1}{\sqrt{m} }\Delta_{n,k} \Big)^2 \Big] = \Var\Big[\frac{1}{m} \sum_{i\in I_k} (W_i-1)\Big] = \frac{n-m}{m(n-1)} \Var[W_1] = O(n^{-1}).
\end{align*}
To see the last equality, we prove that (iii)(iv) of Assumption \ref{asp: BTWeights} implies the sequence $\{W_i\}_{i=1}^n$ is uniformly square-integrable: for any $\lambda$ sufficiently large such that $u^2 \P(W_{i} \ge u) \le \epsilon^2$ for large $n$ when $u \ge \lambda$, write
\begin{align*}
    \E\big[W_{i}^2 \ind(W_{i} \ge \lambda)\big] &= 2 \int_0^\infty u \P(W_{i} \ge u \vee \lambda) \d u \\
    &= \lambda^2 \P(W_{i} \ge \lambda) + 2 \int_{\lambda}^\infty u \P(W_{i} \ge u) \d u \\
    &\le \epsilon^2 + 2 \Big[ \sup_{u \ge \lambda}  u \sqrt{\P(W_{i} \ge u)}  \Big]\int_{\lambda}^\infty \sqrt{\P(W_{i} \ge u)} \d u \\
    &\le \epsilon^2 + 2C \epsilon.
\end{align*}
The arbitrariness of $\epsilon$ then yields uniform square-integrablility, that is,
\begin{align*}
    \lim_{\lambda \to \infty} \limsup_{n \to \infty} \E\big[W_{i}^2 \ind(W_{i} \ge \lambda)\big] = 0, \yestag \label{eq:DML,BTweights,USI}
\end{align*}
which implies the existence of $W_i$'s second moment and concludes $\Delta_{n,k} = O_{\P}(1)$. Together we have established the rate for $T_3$ as claimed.

\vspace{0.2cm}

{\bf Step IV.} Here we first derive the bound \eqref{eq:DML,S2,T6eq} for $T_6$. For any fixed $\eta \in \mathcal{T}_n$, define 
\begin{align*}
    \mathcal{F}_{2,\eta} := \big\{ \psi_j(\cdot; \theta, \eta) - \psi_j(\cdot; \theta_0, \eta_0): j \in \zahl{d_{\theta}}, \|\theta - \theta_0\| \le \tau_n \big\}.
\end{align*}
It is then immediate that 
\[
T_6 \lesssim \lVert \Gk \rVert_{\mathcal{F}_{2,\hat\eta_{0,k}}}. 
\]
Obviously, $F_{2,\eta} := F_{1,\eta} + F_{1,\eta_0}$ is an envelope function for $\mathcal{F}_{2,\eta}$ and satisfies the moment condition $$\lVert F_{2,\eta} \rVert_{\P,q} \le \lVert F_{1,\eta} \rVert_{\P,q} + \lVert F_{1,\eta_0} \rVert_{\P,q} = 2c_1$$ by the triangle inequality. We can also find a constant $C'$ large enough so that $$\sup_{f \in \mathcal{F}_{2,\eta}} \| f \|_{\P,2} \le C' r_n' \le \| F_{2,\eta} \|_{\P,2}$$ by Assumption~\ref{asp: BT,DML,2}(i)(iii). Moreover, since $\mathcal{F}_{2,\eta} \subseteq \mathcal{F}_{1,\eta} - \mathcal{F}_{1,\eta_0}$, the proof of Theorem 3 in \cite{andrews1994empirical} yields
\begin{align*}
    & \sup_\Q  \log N \big(\epsilon \|F_{2,\eta}\|_{\Q,2}, \mathcal{F}_{2,\eta}, \| \cdot \|_{\Q,2} \big) \le \sup_\Q  \log N \big(\epsilon \|F_{2,\eta}\|_{\Q,2}, \mathcal{F}_{1,\eta} - \mathcal{F}_{1,\eta_0}, \| \cdot \|_{\Q,2} \big) \\
    & ~ \le \sup_\Q  \log N \big((\epsilon/2) \|F_{1,\eta}\|_{\Q,2}, \mathcal{F}_{1,\eta}, \| \cdot \|_{\Q,2} \big) + \sup_\Q  \log N \big((\epsilon/2) \|F_{1,\eta_0}\|_{\Q,2}, \mathcal{F}_{1,\eta_0}, \| \cdot \|_{\Q,2} \big),
\end{align*}
which is upper bounded by $2v \log(2a/\epsilon)$ for all $0<\epsilon \le 1$ by Assumption~\ref{asp: BT,DML,2}(ii). Therefore, applying Lemma \ref{lemma:MaximalIneq} conditioning on $\{X_i\}_{i \in I_k^c}$, we have, with $\P_{X}^o$-probability $1-o(1)$,
\begin{align*}
     \sup_{f \in \mathcal{F}_{2,\eta}} \lvert \Gk(f) \rvert \lesssim r_n' \log^{1/2}(1/r_n') + n^{1/q-1/2} \log n.
\end{align*}
Replacing the $\eta$ with $\hat \eta_{0,k}$ concludes \eqref{eq:DML,S2,T6eq}. 

Next, we prove the probability bound \eqref{eq:DML,S2,T7eq} for $T_7$. Similarly as in Step III, we can write 
\begin{align*}
   T_7 &\le \sup_{\theta: \lVert \theta - \theta_0 \rVert \leq \tau_n} \big\lVert \BtGko \big[ \psi(\cdot; {\theta}, \hat{\eta}_{0,k}) - \psi(\cdot; {\theta}_{0}, {\eta}_{0}) \big]  \big\rVert + \sup_{\theta: \lVert \theta - \theta_0 \rVert \leq \tau_n} \big\lVert \P\big[ \psi(\cdot; {\theta}, \hat{\eta}_{0,k}) - \psi(\cdot; {\theta}_{0}, {\eta}_{0}) \big]  \big\rVert \lvert \Delta_{n,k} \rvert \\
   & =: T_{7,c} + T_{7,r}.
\end{align*}
For the second term above, note that Assumption~\ref{asp: BT,DML,2}(iii) and $\Delta_{n,k} = O_{\P}(1)$, which was shown in Step III, immediately gives 
\[
T_{7,r} = O_{\P_{XW}}(r_n').
\]
It then suffices to show
\begin{align*}
    \E_{XW}^o [ T_{7,c} ] \lesssim r_n' \log^{1/2}(1/r_n') + a_n^{(q-2)/(3q-2)}\log(1/a_n). \yestag \label{eq:DML,S4,SG2}
\end{align*}
With a similar symmetrization argument as in step III, for any fixed $\eta \in \cT_n$ we have
\begin{align*}
     \E_{XW}^o \Big[  \sup_{\theta: \lVert \theta - \theta_0 \rVert \leq \tau_n} \big\lVert \BtGko[\psi(\cdot; {\theta}, \eta) - \psi(\cdot; {\theta}_0, {\eta}_{0})] \big\rVert \Big] \le 2\E_{XW}^o \Big[ \sup_{\theta: \lVert \theta - \theta_0 \rVert \leq \tau_n}  \Big\lVert \frac{1}{\sqrt{m}}\sum_{i=1}^{m} W_i Z_i'(\theta) \Big\rVert \Big],  \yestag \label{eq:DML,S4,symmetrization}
\end{align*}
where we define 
\begin{align*}
    Z_i'(\theta) := \big( \delta_{X_i} - \P_X \big) \big[\psi(\cdot; {\theta}, \eta) - \psi(\cdot; {\theta}_0, {\eta}_{0}) \big].
\end{align*}
Again, by the multiplier inequality, for any $m_0 < \infty$, we have 
\begin{align*}
     \E_{XW}^o \Big[ \sup_{\theta: \lVert \theta - \theta_0 \rVert \leq \tau_n}  \Big\lVert \frac{1}{\sqrt{m}}\sum_{i=1}^{m} W_i Z_i'(\theta) \Big\rVert \Big] &\le m_0 \E_{X}^o \Big[ \sup_{\theta: \lVert \theta - \theta_0 \rVert \leq \tau_n}  \big\lVert Z_1'(\theta) \big\rVert \Big] \Big(\frac{\E_{W}\big[ \max_{1\le i \le m} W_i \big]}{\sqrt{m}}\Big) \\
    & ~~+  S_n  \max_{m_0 \le i \le m} \E_{X}^o \Big[ \sup_{\theta: \lVert \theta - \theta_0 \rVert \leq \tau_n} \Big\lVert \frac{1}{\sqrt{i}} \sum_{j=m_0+1}^i Z_j'(\theta) \Big\rVert \Big] \yestag \label{eq:DML,S4,MultIneq}
\end{align*}
with $S_n$ identically defined as in \eqref{eq:DML,S3,SnDef}. Similarly as \eqref{eq:DML,S3,sG2}, we have, uniformly for all $n$,
\begin{align*}
     \E_{X}^o \Big[ \sup_{\theta: \lVert \theta - \theta_0 \rVert \leq \tau_n}  \big\lVert Z_1'(\theta) \big\rVert \Big] \le 
     2 \sqrt{d_\theta} \E_{X}[F_{2,\eta}] \leq 4c_1\sqrt{d_\theta}.
\end{align*}
Recall $a_n = n^{-1/2}\E[\max_{1\le i \le n}W_i]$ converge to $0$ as $n$ goes to infinity, and by Assumption~\ref{asp: BTWeights}(iii) $S_n$ is uniformly bounded. Re-indexing $s:=i-m_0$ and invoking the definition of $\mathbb{G}_s$ in \eqref{eq:DML,S3,GsDef}, we bound the last term in \eqref{eq:DML,S4,MultIneq} by applying Lemma~\ref{lemma:MaximalIneq} and a similar argument as we did for $T_6$:
\begin{align*}
    & \max_{1 \le s \le m-m_0} \Big\{ \sqrt{\frac{s}{s+m_0}}\E_{X}^o \Big[  \sup_{\theta: \lVert \theta - \theta_0 \rVert \leq \tau_n} \Big\lVert \frac{1}{\sqrt{s}} \sum_{j=1}^s Z_j'(\theta) \Big\rVert \Big] \Big\} 
    \lesssim \max_{1 \le s \le m-m_0} \Big\{ \sqrt{\frac{s}{s+m_0}}\E_{X}^o \Big[ \lVert \mathbb{G}_s \rVert_{\cF_{2,\eta}} \Big] \Big\} \\
    & ~~~ \lesssim \max_{1 \le s \le m-m_0} \sqrt{\frac{s}{s+m_0}} \big\{ r_n' \log^{1/2}(1/r_n') +s^{1/q-1/2} \log(s) \big\} \\
    & ~~~ \le  
    r_n' \log^{1/2}(1/r_n') + \max_{1 \le s \le m-m_0} \sqrt{\frac{s}{s+m_0}} \big\{ s^{1/q-1/2} \log(s)\big\} \\
    & ~~~ \lesssim r_n' \log^{1/2}(1/r_n') + m_0^{1/q-1/2}\log(m_0). \yestag \label{eq:DML,S4,order2}
\end{align*}
 Combining \eqref{eq:DML,S4,symmetrization}-\eqref{eq:DML,S4,order2}, one has, for any $\eta \in \cT_n$,
\begin{align*}
    \E_{XW}^o \Big[  \sup_{\theta: \lVert \theta - \theta_0 \rVert \leq \tau_n} \big\lVert \BtGko[\psi(\cdot; {\theta}, \eta) - \psi(\cdot; {\theta}_0, {\eta}_{0})] \big\rVert \Big] \lesssim m_0 a_n + r_n' \log^{1/2}(1/r_n') + m_0^{1/q-1/2}\log(m_0).
\end{align*}
Setting $m_0(n) := \floor{a_n^{-1/(3/2-1/q)}}{}$ and letting $\eta = \hat \eta_{0,k}$ establishes \eqref{eq:DML,S4,SG2}.

\vspace{0.2cm}

{\bf Step V.} First, we derive the bound \eqref{eq:DML,S2,T4bd} for $T_4$. Let $$\bar{\theta}_{0,k} := \theta_0 - J_0^{-1}\Pk[\psi(\cdot;\theta_0,\eta_0)].$$
Since the singular values of $J_0$ are bounded away from zero and, by
Assumption~\ref{asp: BT,DML,2}(ii) with $\eta_0 \in \cT_n$ we have $\E[ \lVert \Gk[\psi(\cdot;\theta_0,\eta_0)]\rVert^2]\lesssim 1$, Markov's inequality yields
\begin{align*}
    \lVert \bar{\theta}_{0,k} - \theta_0 \rVert \le m^{-1/2} \lVert J_0^{-1} \rVert  \lVert \Gk[\psi(\cdot;\theta_0,\eta_0)] \rVert = O_{\P_X}^o(n^{-1/2}) = o_{\P_X}^o(n^{-1/2} \log n) = o_{\P_X}^o(\tau_n).
\end{align*}
Therefore, by definition of $\Theta$ in Assumption~\ref{asp: BT,DML,1}(i), one has 
\[
\bar{\theta}_{0,k} \in \Theta~~\text{ with $\P_{X}^o$-probability $1-o(1)$.} 
\]
Then with the same probability,
\begin{align*}
    T_4 = \inf_{\theta \in \Theta} \sqrt{m} \big\lVert \Pk [\psi(X; \theta, \hat{\eta}_{0,k})] \big\rVert \le \sqrt{m} \big\lVert \Pk [\psi(X; \bar{\theta}_{0,k}, \hat{\eta}_{0,k})] \big\rVert.
\end{align*}
The righthand side of the above inequality has the upper bound
\begin{align*}
    \sqrt{m} \big\lVert \Pk [\psi(X; \bar{\theta}_{0,k}, \hat{\eta}_{0,k})] \big\rVert &\le \sqrt{m} \big\lVert \Pk [\psi(X; {\theta}_{0}, {\eta}_{0})] + \P [\psi(\cdot; \bar{\theta}_{0,k}, \hat{\eta}_{0,k})]
    \big\rVert \\
    &~~ + \big\lVert \Gk [ \psi(\cdot; \bar{\theta}_{0,k}, \hat{\eta}_{0,k}) - \psi(\cdot; {\theta}_0, {\eta}_{0}) ] \big\rVert \\
    & \le \sqrt{n} \lambda_n + T_8 + T_6,
\end{align*}
where the second inequality is given by \eqref{eq:DML,S2,lin}, the linearization of $\P [\psi(\cdot; \bar{\theta}_{0,k}, \hat{\eta}_{0,k})]$. 

Next, we derive the bound \eqref{eq:DML,S2,T5bd} for $T_5$. Let 
\[
\bar{\theta}_{0,k}^* := \theta_0 - J_0^{-1} \BtPk[\psi(\cdot;\theta_0,\eta_0)]. 
\]
Notice that 
\begin{align*}
    \sqrt{m} \BtPk[\psi(\cdot;\theta_0,\eta_0)] = \BtGk [\psi(\cdot;\theta_0,\eta_0)] + \Gk [\psi(\cdot;\theta_0,\eta_0)] = O_{\P_{XW}}(1),
\end{align*}
as $\Gk [\psi(\cdot;\theta_0,\eta_0)] = O_{\P}(1)$ and we claim $\BtGk [\psi(\cdot;\theta_0,\eta_0)] = O_{\P_{XW}}(1)$: 
for any $j \in \zahl{d_{\theta}}$, the conditional second moment of the $j$-th component of $\BtGk [\psi(\cdot;\theta_0,\eta_0)]$ can be written as 
\begin{align*}
    & \E_{W|X}\Big[\Big(\frac{1}{\sqrt{m}} \sum_{i \in I_k} (W_i-1) \psi_{j}(X_i;\theta_0, \eta_0)\Big)^2\Big] \\
    & = \frac{1}{m} \Big\{ \E\big[(W_1-1)^2\big] \sum_{i \in I_k} \psi_{j}^2(X_i;\theta_0, \eta_0) + \E[(W_1-1)(W_2-1)] \sum_{i, \ell \in I_k, i \neq\ell} \psi_{j}(X_i;\theta_0, \eta_0) \psi_{j}(X_\ell;\theta_0, \eta_0) \Big\} \\
    & = \frac{1}{m} \E\big[(W_1-1)^2\big] \Big\{  \sum_{i \in I_k} \psi_{j}^2(X_i;\theta_0, \eta_0) - \frac{1}{n-1} \sum_{i, \ell \in I_k, i \neq\ell} \psi_{j}(X_i;\theta_0, \eta_0) \psi_{j}(X_\ell;\theta_0, \eta_0) \Big\} \\
    & \le \frac{1}{m} \Big(1+\frac{m-1}{n-1} \Big) \E\big[(W_1-1)^2\big]  \Big( \sum_{i \in I_k} \psi_{j}^2(X_i;\theta_0, \eta_0) \Big)\\
    & \le 2 \E\big[(W_1-1)^2\big] \Pk [\psi_{j}^2(\cdot;\theta_0, \eta_0)],
\end{align*}
where the second equality is a result of expanding $\Var[\sum_{i=1}^n(W_i-1)]=0$ and the first inequality uses the fact that for any $I_k$-indexed sequence of real numbers $\{b_i\}_{i \in I_k}$, one has the basic inequality $\lvert \sum_{i \neq \ell} b_i b_\ell \rvert \le (m-1) \sum_{i \in I_k}b_i^2$. Since $\E[(W_1-1)^2]$ is uniformly bounded by \eqref{eq:DML,BTweights,USI}, applying the law of total expectation and noting Assumption~\ref{asp: BT,DML,2}(ii) completes the proof for the claim. 

We then have
\begin{align*}
    \lVert \bar{\theta}_{0,k}^* - \theta_0 \rVert \le m^{-1/2} \lVert J_0^{-1} \rVert  \lVert m^{1/2} \BtPk[\psi(\cdot;\theta_0,\eta_0)] \rVert = O_{\P_{XW}}(n^{-1/2}) = o_{\P_{XW}}(n^{-1/2} \log n) = o_{\P_{XW}}(\tau_n),
\end{align*}
and we have $\bar{\theta}_{0,k}^* \in \Theta$ with $\P_{XW}^o$-probability $1-o(1)$. With the same probability,
\begin{align*}
    T_5 = \inf_{\theta \in \Theta} \sqrt{m} \lVert \BtPk [\psi(X; \theta, \hat{\eta}_{0,k})] \rVert \le \sqrt{m} \big\lVert \BtPk [\psi(X; \bar{\theta}_{0,k}^*, \hat{\eta}_{0,k})] \big\rVert.
\end{align*}
By the triangle inequality, the righthand side is upper bounded by
\begin{align*}
    \sqrt{m} \big\lVert \BtPk [\psi(X; \bar{\theta}_{0,k}^*, \hat{\eta}_{0,k})] \big\rVert &\le \sqrt{m} \big\lVert \BtPk [\psi(X; {\theta}_{0}, {\eta}_{0})] + \P [\psi(\cdot; \bar{\theta}_{0,k}^*, \hat{\eta}_{0,k})]
    \big\rVert \\
    &~~ + \big\lVert \BtGk [ \psi(\cdot; \bar{\theta}_{0,k}^*, \hat{\eta}_{0,k}) - \psi(\cdot; {\theta}_0, {\eta}_{0}) ] \big\rVert \\
    &~~ + \big\lVert \Gk [ \psi(\cdot; \bar{\theta}_{0,k}^*, \hat{\eta}_{0,k}) - \psi(\cdot; {\theta}_0, {\eta}_{0}) ] \big\rVert \\
    & \le \sqrt{n} \lambda_n + T_8 + T_7 + T_6,
\end{align*}
as we can linearize $\P [\psi(\cdot; \bar{\theta}_{0,k}^*, \hat{\eta}_{0,k})]$ in the same way. 

\vspace{0.2cm}

{\bf Step VI.} In this part we will show that \eqref{eq:DML,BTGn} indeed implies \eqref{eq:DML,BTconsistency1}. Employing Cramér–Wold device, it is equivalent to showing that, for any $z \in \bR^{d_\theta}$, 
\begin{align*}
    \sup_{t \in \bR} \Big\lvert \P_{W|X}\big(c^{-1} \bG_{n}^* [z^\top \bar \psi_0 (\cdot)] \le t \big) - \P\big(N(0, z^\top \Sigma^2 z) \le t\big) \Big\rvert &=  o_{\P_X}^o(1).
\end{align*}
To this end, we adapt the proof of \cite{mason1992rank} to our setup. Define 
\begin{align*}
    H_n := \Big[\frac{1}{n}\sum_{i=1}^n(W_i-1)^2 \frac{1}{n}\sum_{i=1}^n \big(z^\top \bar \psi_0(X_i) - \bP_n [ z^\top \bar{\psi}_0(\cdot)] \big)^{2} \Big]^{-1/2} \bG_{n}^* [z^\top \bar \psi_0 (\cdot)].
\end{align*}
Since one has, by Assumption~\ref{asp: BTWeights}(v) and the weak law of large numbers ,
\begin{align*}
\frac{1}{n} \sum_{i=1}^n (W_i-1)^2 \stackrel{\sf p}{\longrightarrow} c^2 ~\text{ and }~ \frac{1}{n} \sum_{i=1}^n \big(z^\top \bar \psi_0(X_i) - \bP_n [ z^\top \bar{\psi}_0(\cdot)] \big)^{2} = z^\top \Sigma^2 z + o_{\P}(1). \yestag \label{eq:matching,Pcvrg_1}
\end{align*}
To prove \eqref{eq:DML,BTconsistency1}, it then suffices to show
\begin{align*}
    \sup_{t \in \bR} \Big\lvert \P_{W|X}\big(H_n \le t \big) - \P\big(N(0,1) \le t\big) \Big\rvert = o_{\P_X}(1). \yestag \label{eq:matching,ANormalZ_n}
\end{align*}
Let $R=(R_i)_{i=1}^n$ be a random permutation of $\zahl{n}$, uniformly distributed over all $n!$ permutations and independent of $(X,W)$. Writing $\{W_{R_i}\}_{i=1}^n$ to be the permuted weights of $\{W_i\}_{i=1}^n$ according to $R$, we denote the permuted version of $H_n$ as
\begin{align*}
    H_n^R &:= \Big[\sum_{i=1}^n(W_i-1)^2\sum_{i=1}^n \big(z^\top \bar \psi_0(X_i) - \bP_n [ z^\top \bar{\psi}_0(\cdot)] \big)^2 \Big]^{-1/2} 
    \sqrt{n} \sum_{i=1}^n (W_{R_i}-1)z^\top \bar \psi_0(X_i).
\end{align*}
Since $W$ is exchangeable, $H_n$ and $H_n^R$ have the same distribution and it remains to show
\begin{align*}
    \sup_{t \in \bR} \Big\lvert \P_{RW|X}\big(H_n^R \le t \big) - \P\big(N(0, 1) \le t\big) \Big\rvert = o_{\P_X}(1), \yestag \label{eq:matching,ANormalZ_R}
\end{align*}
or equivalently, any subsequence $\{n_k\}_{k \ge 0}$ contains a further subsequence $\{n_{k(\ell)}\}_{\ell \ge 0}$ such that
\begin{align*}
    \sup_{t \in \bR} \Big\lvert \P_{RW|X}\big(H_{n_{k(\ell)}}^R \le t \big) - \P\big(N(0, 1) \le t\big) \Big\rvert \stackrel{\sf a.s.}{\longrightarrow}  0
\end{align*}
almost surely as $\ell \to \infty$. For any $i,j \in \zahl{n}$, we define
\begin{align*}
    U_i^2 &:= \frac{(z^\top \bar \psi_0(X_i) - \Pn [z^\top \bar \psi_0(\cdot)] )^2}{\sum_{i=1}^n(z^\top \bar \psi_0(X_i) - \Pn [z^\top \bar \psi_0(\cdot)])^2}, \\
    V_i^2 &:= \frac{(W_{i}-1)^2}{\sum_{i=1}^n(W_{i}-1)^2}, \\
    \delta_{ij}^2 &:= 
    \frac{n (z^\top \bar \psi_0(X_i) - \Pn [z^\top \bar \psi_0(\cdot)] )^2 (W_{j}-1)^2}{\sum_{i=1}^n(z^\top \bar \psi_0(X_i) - \Pn [z^\top \bar \psi_0(\cdot)] )^2 \sum_{j=1}^n(W_{j}-1)^2} =n U_i^2 V_j^2,
\end{align*} 
and for any $\delta>0$,
\begin{align*}
    d_n(\delta) := \frac{1}{n} \sum_{i=1}^n \sum_{j=1}^n  \delta_{ij}^2 \ind\big(\delta_{ij}^2 > \delta \big).
\end{align*}
We then claim 
\begin{align*}
    \max_{i \in \zahl{n}} U_i^2 = o_{\P_X}(1) ~\text{ and }~  \max_{i \in \zahl{n}} V_i^2 = o_{\P_W}(1). \yestag \label{eq:matching,UV_Pcvrg}
\end{align*}
The first part in \eqref{eq:matching,UV_Pcvrg} follows from $\E[\lvert z^\top \bar \psi_0(X_i)\rvert^q] < \infty$, which is implied by Assumption~\ref{asp: BT,DML,1}(iii) and Assumption~\ref{asp: BT,DML,2}(ii). 
We can see this claim by observing that, for any $\epsilon>0$,
\begin{align*}
\P\Big(\max_{i \in \zahl{n}}\lvert z^\top \bar \psi_0(X_i) \rvert >\epsilon\sqrt n\Big)
\le n\P\big(\lvert z^\top \bar \psi_0(X_1)\rvert>\epsilon\sqrt n\big)
\le \epsilon^{-q} n^{1-q/2} \E\big[|z^\top \bar \psi_0(X_1)|^q\big], 
\end{align*}
and the upper bound holds as
\begin{align*}
    U_i^2=  \frac{(z^\top\bar \psi_0(X_i) - \Pn [z^\top \bar \psi_0(\cdot)])^2}{n \Pn (z^\top \bar \psi_0(\cdot) - \Pn [z^\top \bar \psi_0(\cdot)])^2} \le \frac{\max_{i \in \zahl{n}} 2  \lvert z^\top\bar \psi_0(X_i) \rvert^2}{n \Pn (z^\top \bar \psi_0(\cdot) - \Pn [z^\top \bar \psi_0(\cdot)])^2}
    \yestag \label{eq:DML,cvg,Ubound}
\end{align*}
with 
\[
\Pn (z^\top \bar \psi_0(\cdot) - \Pn [z^\top \bar \psi_0(\cdot)])^2 = z^\top\Sigma^2 z + o_{\P}(1). 
\]
For establishing the second part in \eqref{eq:matching,UV_Pcvrg}, we first note the uniform square-integrablility of $W$ in \eqref{eq:DML,BTweights,USI} and by Markov inequality, for any $\epsilon>0$, we can choose $n$ sufficiently large such that 
\begin{align*}
    \limsup_{n \to \infty} u^{2} \P\big( W_{i} >u \big) \le \limsup_{n \to \infty} \E \big[ W_{i}^2 \ind(W_{i} >u ) \big] \le \epsilon^2
\end{align*} 
for all $u \ge \epsilon \sqrt{m}$; by the first part of Proposition~\ref{Prop:BootstrapRate}, it then implies
\begin{align*}
    \frac{1}{\sqrt{n}} \E\Big[ \max_{i \in \zahl{n}} W_{i} \Big] 
    \le \epsilon + \sqrt{n} \Big[ \sup_{ u \ge \epsilon \sqrt{n}} u^{2} \P(W_{i} >u ) \Big]  \int_{\epsilon \sqrt{n}}^{\infty} u^{-2} \d u \le 2\epsilon.
\end{align*}
Following an analogous argument as \eqref{eq:DML,cvg,Ubound} above and applying Markov's inequality, one can show that the second part of \eqref{eq:matching,UV_Pcvrg} is achieved.
Next, we shall show that for all $\delta>0$, 
\[
d_n(\delta) = o_{\P_{XW}}(1). 
\]
For any $\epsilon>0$, let 
\begin{align*}
    A_n := \Big\{\max_{i \in \zahl{n}} U_i^2 < \epsilon \Big\} ~\text{ and }~ B_n := \Big\{ \frac{1}{n}\sum_{i \in \zahl{n}} (W_{i}-1)^2 > \frac{c^2}{2} \Big\}
\end{align*}
be the events with $\P_{XW}$-probability approaching one. It then follows that 
\begin{align*}
    \{\delta_{ij}^2 > \delta \} \cap A_n \cap B_n \subseteq \{n V_j^2 > \epsilon^{-1} \delta\} \cap B_n \subseteq \{(W_{j}-1)^2 > \epsilon^{-1} \delta c^2/2\},
\end{align*}
which gives
\begin{align*}
    \frac{1}{n} \sum_{i=1}^n \sum_{j=1}^n  \delta_{ij}^2 \ind\big(\delta_{ij}^2 > \delta \big) \ind_{A_n\cap B_n} &
    \le \sum_{i=1}^n \sum_{j=1}^n  U_i^2 V_j^2 \ind\big((W_{j}-1)^2 > \epsilon^{-1} \delta c^2 /2\big) \ind_{B_n} \\
    &= \frac{{n}^{-1} \sum_{j=1}^n (W_{j}-1)^2 \ind\big((W_{j}-1)^2 > \epsilon^{-1} \delta c^2  /2\big) \ind_{B_n}}{{n}^{-1} \sum_{j=1}^n (W_{j}-1)^2 }  \\
    & \le \frac{2}{c^2 n} \sum_{j=1}^n (W_{j}-1)^2 \ind\big((W_{j}-1)^2 > \epsilon^{-1} \delta c^2 /2\big).
\end{align*}
The above further implies that, for any $\epsilon'>0$, one has by Markov inequality
\begin{align*}
    & \P_{XW} \Big( \frac{1}{n} \sum_{i=1}^n \sum_{j=1}^n  \delta_{ij}^2 \ind\big(\delta_{ij}^2 > \delta \big) > \epsilon' \Big) \\
    & ~\le \P(A_n^c) + \P(B_n^c) + \P \Big( \frac{2}{c^2 n} \sum_{j=1}^n (W_{j}-1)^2 \ind\big((W_{j}-1)^2 > \epsilon^{-1} \delta c^2/2\big) > \epsilon' \Big) \\
    & ~\le \P(A_n^c) + \P(B_n^c) + \frac{2}{c^2 \epsilon'} \E \big[ (W_{j}-1)^2 \ind\big((W_{j}-1)^2 > \epsilon^{-1} \delta c^2 /2\big) \big].
\end{align*}
Due to the arbitrariness of $\epsilon$ and noting that \eqref{eq:DML,BTweights,USI} gives $W_{j}-1$ is uniformly square-integrable, we conclude
\begin{align*}
    \frac{1}{n} \sum_{i=1}^n \sum_{j=1}^n  \delta_{ij}^2 \ind\big(\delta_{ij}^2 > \delta \big) = o_{\P_{XW}}(1). \yestag \label{eq:matching,d_Pcvrg}
\end{align*}
Now, combining \eqref{eq:matching,UV_Pcvrg} and  \eqref{eq:matching,d_Pcvrg}, for any subsequence $\{n_k\}_{k \ge 1}$, we then have a further subsequence $\{n_{k(\ell)}\}_{\ell \ge 1}$ such that for all $\delta>0$,
\begin{align*}
    \max_{1\le i \le n_{k(\ell)}} U_i^2 \stackrel{\sf a.s.}{\longrightarrow} 0 ,~~  \max_{1\le i \le n_{k(\ell)}} V_i^2 \stackrel{\sf a.s.}{\longrightarrow} 0 ~\text{ and }~ d_{n_{k(\ell)}}(\delta) \stackrel{\sf a.s.}{\longrightarrow} 0, \yestag \label{eq:matching,UV_ascvrg}
\end{align*}
where the last claim follows from a diagonal argument and noting that $d_n(\delta)$ is decreasing in $\delta$. Conditioning on $(X,W)$ and applying Theorem 4.1 in \cite{hajek1961some}, the linear rank statistics satisfies
\begin{align*}
    \sup_{t \in \bR} \Big\lvert \P_{R| XW}\big(H_{n_{k(\ell)}}^{R} \le t \big) - \P\big(N(0,1) \le t\big) \Big\rvert \stackrel{\sf a.s.}{\longrightarrow} 0
\end{align*}
as $\ell \to \infty$. A conditional bounded convergence theorem then yields 
\begin{align*}
    \sup_{t \in \bR} \Big\lvert \P_{RW|X}\big(H_{n_{k(\ell)}}^{R} \le t \big) - \P\big(N(0, 1) \le t\big) \Big\rvert \stackrel{\sf a.s.}{\longrightarrow} 0
\end{align*}
as $\ell \to \infty$, which completes the proof of \eqref{eq:matching,ANormalZ_R}. By Lemma~\ref{lemma:StoOrders}, this yields the row-wise version of \eqref{eq:DML,BTconsistency1} along the arbitrary sequence $\{\Q_n\}_{n\ge1}$. Since the latter sequence was arbitrary, Lemma~\ref{lemma:UniformSequenceSO} upgrades this row-wise convergence to the uniform statement in \eqref{eq:DML,BTconsistency1}.
\end{proof}

Lastly, we give the proof for Proposition~\ref{Prop:BootstrapRate} that characterizes the behavior of the rate induced by the bootstrap procedure. Specially, for Efron's bootstrap, $\max_{i\in \zahl{n}} W_i$ is the maximum occupancy in the classical $n$-balls-$n$-bins model. In particular, it is classical that this quantity is of order $\log n / \log\log n$; cf. \cite{gonnet1981expected}'s analysis of separate chaining and the discussion in \cite{raab1998balls}. We give a direct proof here for completeness.

\begin{proof}[Proof of Proposition~\ref{Prop:BootstrapRate}]
We prove the proposition in four parts.

\vspace{0.2cm}

{\bf Part I. Proving \eqref{eq:generate-rate-bootstrap}.}
For the lower bound, note $\max_{i \in \zahl{n}}W_i \ge \frac{1}{n}\sum_{i=1}^nW_i = 1$. For the upper bound, write
\begin{align*}
    \frac{1}{\sqrt{n}} \E\Big[ \max_{i \in \zahl{n}} W_{i} \Big] & = \frac{1}{\sqrt{n}} \int_{0}^{x} \P\Big( \max_{i \in\zahl{n}} W_{i} >t \Big) \d t + \frac{1}{\sqrt{n}} \int_{x}^{\infty} \sum_{i \in \zahl{n}} \P( W_{i} >t ) \d t \\
    & \le \frac{x}{\sqrt{n}} + \sqrt{n} \Big[ \sup_{ t \ge x} t^{2} \P(W_{i} >t ) \Big]  \int_{x}^{\infty} t^{-2} \d t, 
\end{align*}
and the corresponding rates in (a)-(c) can be readily obtained.

\vspace{0.2cm}

{\bf Part II. Proving Claim (i).}
Now we give the rate for Efron's bootstrap. First, we derive the upper bound for $\E[\max_{i\in\zahl{n}}W_i]$.  For any $\epsilon>0$ with $L_n := \log n / \log\log n$, define 
\[
m_+ := (1+\epsilon)L_n. 
\]
We write
\begin{align*}
    \E\Big[\max_{i \in \zahl{n}}W_i\Big] = \sum_{m < m_+}\P \Big(\max_{i \in \zahl{n}}W_i \geq m\Big) + \sum_{m \ge m_+} \P \Big(\max_{i \in \zahl{n}}W_i \geq m\Big) \leq m_+ + \sum_{m \ge m_+} \P \Big(\max_{i \in \zahl{n}}W_i \geq m\Big).
\end{align*}
By the arbitrariness of $\epsilon$, it only suffices to show the second term goes to $0$. By Chernoff bound, for any $m>0$ and setting $\lambda = \log(m)$ gives
\begin{align*}
    \P(W_1 \ge m) \le \exp(-\lambda m)\Big(1-\frac{1}{n}+\frac{1}{n}\exp(\lambda)\Big)^n \le \exp\big\{\exp(\lambda)-1-\lambda m \big\} = e^{-1}\Big(\frac{e}{m}\Big)^m,
\end{align*}
which implies
\begin{align*}
    \P \Big(\max_{i \in \zahl{n}}W_i \geq m\Big) \le ne^{-1}\Big(\frac{e}{m}\Big)^m := B(m).
\end{align*}
Notice for large $m$,
\begin{align*}
    \frac{B(m+1)}{B(m)} = \frac{e}{m+1}\Big(\frac{m}{m+1}\Big)^m \asymp \frac{1}{m};
\end{align*}
then 
\begin{align*}
    \sum_{m \ge m_+} \P \Big(\max_{i \in \zahl{n}}W_i \geq m\Big) \lesssim B(m_+) \sum_{j\geq0} \Big( \frac{1}{m_+} \Big)^j  \lesssim B(m_+)
\end{align*}
and the desired result follows immediately from 
\[
\log(B(m_+)) = -\epsilon \log(n) + o(\log(n)) \to -\infty
\]
 as $n$ goes to infinity. 
 
 Next we derive the lower bound. With any $\epsilon>0$ and the same $L_n$ we write 
 \[
 m_-:= \floor{(1-\epsilon)L_n}{}. 
 \]
 By Markov's inequality,
\begin{align*}
    \E\Big[\max_{i \in \zahl{n}}W_i\Big] \ge m_- \P \Big(\max_{i \in \zahl{n}}W_i \geq m_-\Big) \ge m_- \P(N_{m_-}>0) \ge m_- \frac{\E[N_{m_-}]^2}{\E[N_{m_-}^2]},
\end{align*}
where for any $m>0$, we define $N_m$ as the number of weights such that the weight takes the value $m$, that is, $ N_m := \sum_{i=1}^n \ind(W_i = m)$. Moreover, the last inequality above is obtained by applying the Paley–Zygmund inequality. Then by the arbitrariness of $\epsilon$, it only suffices to show 
\[
\E[N_{m_-}]^2/\E[N_{m_-}^2]= 1+o(1). 
\]

For the first moment, we have when $m = o(n^{1/2})$,
\begin{align*}
    \E[N_m] &= n\P(W_1=m) = n \binom{n}{m}n^{-m}\Big(1-\frac{1}{n} \Big)^{n-m} \\
    & = n (1-o(1))\frac{1}{m!}(e^{-1}+o(1))(1+o(1))= n \frac{e^{-1}+o(1)}{m!},
\end{align*}
where the first equality on the second line comes from 
\begin{align*}
    \frac{1}{m!} \ge \binom{n}{m}n^{-m} \ge \frac{1}{m!} \Big(1- \frac{m(m-1)}{2n}\Big) = \frac{1-o(1)}{m!} 
\end{align*}
and
\begin{align*}
    \log\Big\{\Big(1-\frac{1}{n}\Big)^{-m}\Big\} = -\frac{m}{n}+O\Big(\frac{1}{n^2}\Big) \to 0.
\end{align*}
Then by Stirling's formula, 
\begin{align*}
    \log(\E[N_{m_-}]) = \log(n) - m_-\log(m_-) -m_- +O(\log(m_-)) = \epsilon\log(n) + o(\log(n)),
\end{align*}
which implies $\E[N_{m_-}] \to \infty$. 

For the second moment, for any $m=o(n^{1/2})$, with a similar analysis we have 
\begin{align*}
    \E[N_{m}^2] &= \E[N_{m}] + n(n-1)\P(W_1=W_2=m) \\
    &= \E[N_{m}] + n(n-1) \frac{n!}{m!m!(n-2m)!}n^{-2m}\Big(1-\frac{2}{n}\Big)^{n-2m}\\   
    &= \E[N_{m}] + n(n-1) \frac{e^{-2}+o(1)}{(m!)^2}.
\end{align*}
Comparing this with the order for $\E[N_{m_-}]$, we derive the desired result.

\vspace{0.2cm}

{\bf Part III. Proving Claim (ii).} We first prove the rate for normalized multiplier bootstrap with exponential-tail weights. Denoting $\E[Y_1]$ as $\mu$, we observe that for any $s>0$, by Chernoff bound, one has
\begin{align*}
    \E\Big[\max_{i \in \zahl{n}}W_i\Big] 
    \le \frac{2}{\mu} \E\Big[\max_{i \in \zahl{n}}Y_i\Big] + n\P\Big(\frac{1}{n}\sum_{i=1}^n Y_i < \frac{\mu}{2}\Big) 
    \le \frac{2}{\mu} \E\Big[\max_{i \in \zahl{n}}Y_i\Big] + n e^{sn\mu/2}\big(\E[e^{-sY_1}]\big)^n.
\end{align*}
Further note that, by sub-exponentiality, the second term above is exponentially small as 
\[
\E[e^{-sY_1}]<e^{-s\mu/2}~~ \text{ for sufficiently small $s$}.
\]
Additionally, the first term satisfies
\begin{align*}
    \E\Big[\max_{i \in \zahl{n}}Y_i\Big] \le x + n \int_{x}^\infty \P(Y_1 \ge t)\d t \lesssim \log(n),
\end{align*}
which conclude the case for the normalized multiplier bootstrap. 

For the double bootstrap, note that, for any $\lambda>0$,
\begin{align*}
    \E[e^{\lambda W_1} | M_1] = \Big(1+\frac{M_1}{n}(e^\lambda-1)\Big)^n \le \exp\{(e^\lambda-1)M_1\},
\end{align*}
which implies 
\begin{align*}
    \E[e^{\lambda W_1}] \le \E[\exp\{(e^\lambda-1)M_1\}] \le \Big(1+\frac{e^{e^\lambda-1}-1}{n} \Big)^n \le \exp\{e^{e^\lambda-1}-1\}.
\end{align*}
By Chernoff bound, we know $W_1$ has a uniform exponential tail and the first part of the proof gives the claimed result.

\vspace{0.2cm}

{\bf Part IV. Proving Claim (iii).} 
We prove the existence by considering the grouped delete-$h$ jackknife, which has been discussed in Example 3.6 of \cite{praestgaard1993exchangeably} and satisfies Assumption~\ref{asp: BTWeights}. In this bootstrap scheme, we set 
\[
\cS_{n,h_n}:=\Big\{S\subseteq \zahl{n}: \lvert S\rvert=h_n\Big\} 
\]
and let $S_n$ be drawn uniformly from $\cS_{n,h_n}$. For each $i \in \zahl{n}$, the bootstrap weight component is defined as 
\[
W_i := \frac{n}{n-h_n}\ind(i\notin S_n). 
\]
Note for this bootstrap procedure, if $h_n/n \to \gamma \in (0,1)$, then $c^2= \lim_{n\to \infty} \frac{h_n}{n-h_n} =\frac{\gamma}{1-\gamma}$ and thus
\begin{align*}
a_n = \frac{1}{\sqrt{n}}\frac{n}{n-h_n} \asymp n^{-1/2}.
\end{align*}
This completes the proof.
\end{proof}

\subsection{Auxiliary lemmas}

\begin{lemma}[Multiplier inequality, \cite{wellner1996bootstrapping}]\label{lemma:MultIneq}

Let $W = (W_1, \ldots, W_n)^{\top}$ be a non-negative exchangeable random vector on $(\Omega_W, \cA_W, \P_W)$ and, for every $n$,
\begin{align*}
    S_n := \int_0^{\infty} \sqrt{\P_W \left(W_1 \ge t\right)} \d t < \infty.
\end{align*}
Let $\{Z_i\}_{i=1}^n$'s be i.i.d. $\ell^{\infty}(\mathcal{F}_n)$-valued random elements in $(\Omega_X^{\infty}, \cA_X^{\infty}, \P_X^{\infty})$. Write $\lVert \cdot \rVert_n = \sup_{f \in \mathcal{F}_n} \lvert Z_i(f) \rvert$ and assume $\{Z_i\}_{i=1}^n$'s are independent with $W$. Then for any $n_0, n$ such that $1 \le n_0 < n < \infty$, we have the following inequality:
\begin{align*}
    \E_{XW}^o \Big[ \frac{1}{\sqrt{n}} \Big\lVert \sum_{i=1}^n W_iZ_i \Big\rVert_n \Big] &\le n_0 \E_{XW}^o \big[ \big\lVert Z_1 \big\rVert_n \big] \frac{\E_{W}\big[ \max_{1\le i \le n} W_i \big]}{\sqrt{n}} \\
    & ~~+ S_n \max_{n_0 < i \le n} \E_{X}^o \Big[ \frac{1}{\sqrt{i}} \Big\lVert \sum_{j=n_0+1}^i Z_j \Big\rVert_n \Big].
\end{align*}
\end{lemma}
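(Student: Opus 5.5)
The plan is the classical decomposition behind the multiplier inequality of \cite{wellner1996bootstrapping} (cf.\ Lemma~2.9.1 of van der Vaart and Wellner). I will split off the $n_0$ \emph{largest} weights rather than the first $n_0$ indices, and then write the remaining weights as a telescoping sum of consecutive order-statistic gaps.

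\textbf{Setup.} Let $W_{(1)}\ge W_{(2)}\ge\cdots\ge W_{(n)}\ge 0$ be the order statistics of $W$, and set $W_{(n+1)}:=0$. Let $\pi$ be a random permutation, chosen uniformly among the rankings in case of ties, with $W_{\pi(k)}=W_{(k)}$. The vector $W$, and hence $\pi$ and the order statistics, are independent of the i.i.d.\ sequence $\{Z_i\}$. Therefore, conditionally on $W$, the rearranged sequence $(Z_{\pi(1)},\dots,Z_{\pi(n)})$ has the same law as $(Z_1,\dots,Z_n)$. This exchangeability transfer is the only place where the structure of $W$ enters beyond its marginal tail.

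\textbf{Step 1 (top $n_0$ weights).} Write
\[
\sum_{i=1}^n W_iZ_i=\sum_{k\le n_0}W_{(k)}Z_{\pi(k)}+\sum_{k>n_0}W_{(k)}Z_{\pi(k)} .
\]
For the first sum,
\[
\Big\lVert\sum_{k\le n_0}W_{(k)}Z_{\pi(k)}\Big\rVert_n\le \max_i W_i\sum_{k\le n_0}\lVert Z_{\pi(k)}\rVert_n .
\]
Conditioning on $W$ and using the distributional identity above, its outer expectation is at most $n_0\,\E_X^o\lVert Z_1\rVert_n\,\E_W[\max_i W_i]$. Dividing by $\sqrt n$ gives the first term of the claimed bound.

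\textbf{Step 2 (remaining weights, Abel summation).} For $k>n_0$ write $W_{(k)}=\sum_{l\ge k}(W_{(l)}-W_{(l+1)})$. Interchanging the order of summation gives
\[
\sum_{k>n_0}W_{(k)}Z_{\pi(k)}=\sum_{l=n_0+1}^n (W_{(l)}-W_{(l+1)})\sum_{k=n_0+1}^{l}Z_{\pi(k)} .
\]
Each gap is nonnegative. Conditionally on $W$, the inner sum has the law of $\sum_{j=n_0+1}^l Z_j$. Its expected norm is therefore at most $\sqrt l\,M_n$, where
\[
M_n:=\max_{n_0<i\le n}\E_X^o\Big[i^{-1/2}\Big\lVert\sum_{j=n_0+1}^iZ_j\Big\rVert_n\Big].
\]
Hence the conditional expectation of this part is bounded by $M_n\sum_{l}\sqrt l\,(W_{(l)}-W_{(l+1)})$.

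\textbf{Step 3 (the gap sum).} Summing by parts and using $\sqrt l=\int_0^\infty \ind(l\ge s^2)\,\d s$, one gets
\[
\sum_{l=1}^n\sqrt l\,(W_{(l)}-W_{(l+1)})=\int_0^\infty\sqrt{\#\{i:W_i\ge t\}}\,\d t .
\]
Indeed, $\#\{i:W_i\ge t\}=l$ exactly when $W_{(l+1)}<t\le W_{(l)}$. Taking expectations, applying Jensen's inequality inside the integral, and using exchangeability gives
\[
\E\sqrt{\#\{i:W_i\ge t\}}\le\sqrt{n\P_W(W_1\ge t)} .
\]
Thus the expected gap sum is at most $\sqrt n\,S_n$. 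Dividing by $\sqrt n$ yields the second term $S_nM_n$.

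\textbf{Main obstacle.} The delicate point is measurability. The statement uses outer expectations over the nonmeasurable suprema $\lVert\cdot\rVert_n$, and Step~1 and Step~2 condition on $W$ and then apply Fubini-type interchanges. I would handle this by working on the product space \eqref{eq:ProbSpace} with $Z$ depending only on the first coordinate, and by using the standard inequality $\E^o_{XW}\le\E_W\E_X^o$ for such product structures. The randomized tie-breaking in $\pi$ also needs care, which I would handle by adding an independent uniform permutation to the product space. The algebra in Steps~2–3 is routine once this is in place.
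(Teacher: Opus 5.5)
Your proposal is correct and is essentially the standard argument behind the multiplier inequality of \cite{wellner1996bootstrapping}, which the paper cites without proof: sort by the order statistics of $W$, peel off the $n_0$ largest weights, Abel-sum the remainder over the nonnegative gaps $W_{(l)}-W_{(l+1)}$, and bound $\E\int_0^\infty\sqrt{\#\{i:W_i\ge t\}}\,\d t\le\sqrt{n}\,S_n$ via Jensen and identical marginals. One correction to your measurability remark: the general Fubini inequality for outer expectations goes the other way ($\E_W^o\E_X^o\le\E_{XW}^o$), so the interchange should instead be justified as follows. After the triangle inequality both bounds have the form $\sum_j a_j(W)\,b_j(X)$ with $a_j\ge 0$ measurable (insert $\mathbf{1}\{\pi=\sigma\}$ over the finitely many permutations $\sigma$; a deterministic tie-breaking rule makes the extra randomization unnecessary), so replacing each $b_j$ by its measurable cover, applying ordinary Fubini, and using the permutation invariance of $\P_X^\infty$ yields exactly the two stated terms.
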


Note an immediate consequence of Lemma~\ref{lemma:MultIneq} is that, for any deterministic subset $I$ of $\zahl{n}$ with $\lvert I \rvert = m$, the same subset version result holds with $(W_i)_{i=1}^n$ replaced by $(W_i)_{i\in I}$, summation of $n$ terms replaced by $m$ terms, and $n^{-1/2}\E[\max_{1\le i \le n}W_i]$ replaced by $m^{-1/2}\E[\max_{i\in I}W_i]$.



\begin{lemma}[Maximal Inequality, \cite{chernozhukov2014gaussian}]
\label{lemma:MaximalIneq} 
Let $\{X_i\}_{i=1}^n$ be i.i.d. random elements in $(\Omega_X, \cA_X, \P_X )$. Suppose that $F\geq \sup_{f \in \mathcal{F}}\lvert f\rvert$ is a measurable envelope for a suitably measurable function class $\mathcal{F}$ 
with $\lVert F\rVert_{\P,q} < \infty$ for some $q \geq 2$.  Let $M := \max_{1\le i\le n} F(X_i)$ and $\sigma^{2} > 0$ be any positive constant such that $\sup_{f \in \mathcal{F}}  \lVert f \rVert_{\P,2}^{2} \leq \sigma^{2} \leq \lVert F \rVert_{\P,2}^{2}$. Suppose that there exist some constants $a \ge e$ and $v \ge 1$ such that
\begin{align*}
\log \sup_{\Q} N\big(\epsilon \| F \|_{\Q,2}, \mathcal{F},  \| \cdot \|_{\Q,2}\big) \leq  v \log (a/\epsilon), \ 0 <  \epsilon \leq 1.
\end{align*}
Then
\begin{align*}
\E_\P \big[ \lVert \bG_{n} \rVert_{\mathcal{F}} \big] 
\leq K  \bigg\{ \sqrt{v\sigma^{2} \log \Big( \frac{a \lVert F \rVert_{\P,2}}{\sigma} \Big) } 
+ \frac{v\lVert M \rVert_{\P, 2}}{\sqrt{n}} \log \Big( \frac{a \lVert F \rVert_{\P,2}}{\sigma} \Big) \bigg\},
\end{align*}
where $K$ is an absolute constant.  Moreover, for every $t \geq 1$, with probability $> 1-t^{-q/2}$,
\begin{align*}
\lVert \bG_{n} \rVert_{\mathcal{F}} \leq (1+\alpha) \E_\P \big[ \lVert \bG_{n} \rVert_{\mathcal{F}} \big] 
+ K(q) \Big\{ (\sigma + n^{-1/2} \lVert M \rVert_{\P,q}) \sqrt{t}
+  \alpha^{-1}  n^{-1/2} \lVert M \rVert_{\P,2}t \Big\}, 
\ \forall \alpha > 0,
\end{align*}
where $K(q) > 0$ is a constant depending only on $q$.  In particular, setting $a \geq n$ and $t = \log n$,
with probability $> 1- c(\log n)^{-1}$,
\begin{align*}
\lVert \bG_{n} \rVert_{\mathcal{F}} \leq K(q,c) \bigg\{ \sigma \sqrt{v \log \Big ( \frac{a \lVert F \rVert_{\P,2}}{\sigma} \Big ) } + \frac{v
 \| M \|_{\P,q} } {\sqrt{n}}\log \Big( \frac{a \lVert F \rVert_{\P,2}}{\sigma} \Big) \bigg\},
\end{align*} 
where $  \lVert M \rVert_{\P,q}  \leq n^{1/q} \lVert F\rVert_{\P,q}$ and  $K(q,c) > 0$ is a constant depending only on $q$ and $c$.
\end{lemma}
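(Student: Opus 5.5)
The statement immediately above is Lemma~\ref{lemma:MaximalIneq}, the maximal inequality imported from \cite{chernozhukov2014gaussian}. I would establish it in two stages: first the expectation bound, then the deviation bound, and finally the specialization $t=\log n$, $a\ge n$.

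\textbf{Expectation bound.} The first step is standard symmetrization,
\[
\E\lVert \bG_n\rVert_{\mathcal F}\le 2\E\Big\lVert n^{-1/2}\sum_i \varepsilon_i f(X_i)\Big\rVert_{\mathcal F},
\]
where the $\varepsilon_i$ are Rademacher signs. Conditionally on the data, the symmetrized process is sub-Gaussian with respect to $\lVert\cdot\rVert_{\Pn,2}$. Dudley's entropy integral, combined with the uniform entropy condition applied at $\Q=\Pn$, then gives
\[
\E_\varepsilon\lVert\cdot\rVert_{\mathcal F}\lesssim \lVert F\rVert_{\Pn,2}\int_0^{\hat\sigma_n/\lVert F\rVert_{\Pn,2}}\sqrt{v\log(a/\epsilon)}\,\d\epsilon ,
\qquad
\hat\sigma_n^2:=\sup_{f\in\mathcal F}\Pn f^2 .
\]
This integral is of order $\hat\sigma_n\sqrt{v\log(a\lVert F\rVert_{\Pn,2}/\hat\sigma_n)}$. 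The next step removes the randomness in $\hat\sigma_n$ by the contraction-type bound
\[
\E\hat\sigma_n^2\le\sigma^2+n^{-1/2}\cdot 8\,\E\big[M\,\lVert\bG_n\rVert_{\mathcal F}\big]\cdot(\text{const}),
\]
which follows from symmetrizing $\sup_f|(\Pn-\P)f^2|$ and applying the contraction principle with $|f^2-g^2|\le 2M|f-g|$. Using concavity of $x\mapsto x\sqrt{\log(a/x)}$ and Jensen, one arrives at a quadratic inequality in $Z:=\E\lVert\bG_n\rVert_{\mathcal F}$ of the form
\[
Z\lesssim \sqrt{v\sigma^2 L}+\sqrt{v L\, n^{-1/2}\lVert M\rVert_{\P,2}\,Z},
\qquad L=\log\big(a\lVert F\rVert_{\P,2}/\sigma\big).
\]
Solving it yields the first display. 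I expect this step to be the main obstacle: handling the random normalizer $\lVert F\rVert_{\Pn,2}$ inside the logarithm, and carrying out the Jensen/concavity argument carefully, is where the bookkeeping is delicate.

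\textbf{Deviation bound.} Here I would invoke the Fuk--Nagaev-type concentration inequality for suprema of empirical processes (Einmahl--Li / Adamczak form for unbounded envelopes with $q$ moments), which gives, for every $\alpha>0$,
\[
\lVert\bG_n\rVert_{\mathcal F}\le(1+\alpha)\E\lVert\bG_n\rVert_{\mathcal F}+K(q)\Big[\sigma\sqrt t+\alpha^{-1}n^{-1/2}\lVert M\rVert_{\P,2}\,t+n^{-1/2}\lVert M\rVert_{\P,q}\sqrt t\Big]
\]
with probability at least $1-t^{-q/2}$. The last term comes from a Markov bound at level $t^{-q/2}$ on the truncated part, with threshold $\lVert M\rVert_{\P,q}\sqrt t$.

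\textbf{Specialization.} Finally, set $t=\log n$ and $\alpha=1$. Since $a\ge n$ and $\sigma\le\lVert F\rVert_{\P,2}$, we have $L\ge\log n=t$, so all the extra terms are absorbed into the two main terms. The stated probability $1-c(\log n)^{-1}$ follows because $q\ge2$ gives $(\log n)^{-q/2}\le(\log n)^{-1}$, and $\lVert M\rVert_{\P,2}\le\lVert M\rVert_{\P,q}$. The bound $\lVert M\rVert_{\P,q}\le n^{1/q}\lVert F\rVert_{\P,q}$ follows from
\[
\E M^q\le\sum_i \E F(X_i)^q=n\,\E F^q .
\]
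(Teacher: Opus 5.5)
The paper gives no proof of this lemma; it imports it from \cite{chernozhukov2014gaussian}. Your outline has the right architecture: symmetrization, a conditional Dudley bound under the uniform entropy condition, Jensen, contraction and a fixed-point inequality, then a truncation-based Fuk--Nagaev bound and the specialization. There is, however, a genuine gap in the expectation bound, at exactly the step where the unbounded envelope matters.

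Symmetrization and contraction give
\[
\E\hat\sigma_n^2\le\sigma^2+8n^{-1/2}\,\E[MY],\qquad Y:=\Big\lVert n^{-1/2}\sum_{i=1}^n\varepsilon_i f(X_i)\Big\rVert_{\mathcal F}.
\]
Here $x\mapsto x^2$ is $2M$-Lipschitz on $[-M,M]$, with $M$ fixed given the data. Note that $Y$ is the symmetrized supremum, so your $Z$ should be $\E Y$, not $\E\lVert\mathbb{G}_n\rVert_{\mathcal F}$.

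Your quadratic inequality then tacitly replaces $\E[MY]$ by $\lVert M\rVert_{\P,2}\,\E Y$. That is false in general, because $M$ and $Y$ are driven by the same large values of $F(X_i)$. For $n=1$ and a single function, $Y=M$, and $\E M^2>\lVert M\rVert_{\P,2}\,\E M$ unless $M$ is a.s.\ constant.

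The standard repair is Cauchy--Schwarz followed by the Hoffmann-J{\o}rgensen inequality, $(\E Y^2)^{1/2}\lesssim\E Y+n^{-1/2}\lVert M\rVert_{\P,2}$. This gives $\E\hat\sigma_n^2\lesssim\sigma^2+n^{-1/2}\lVert M\rVert_{\P,2}\,\E Y+n^{-1}\lVert M\rVert_{\P,2}^2$. The new last term is harmless when you solve, since $vL\ge 1$.

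Alternatively, write $\E[MY]=\E[M\,\E_\varepsilon Y]$, insert the conditional Dudley bound for $\E_\varepsilon Y$, and use Cauchy--Schwarz and Jensen to get a fixed-point inequality directly for $\E\hat\sigma_n^2$. This yields $\E\hat\sigma_n^2\lesssim\sigma^2+vL\,n^{-1}\lVert M\rVert_{\P,2}^2$. Without one of these steps, your displayed quadratic inequality does not follow.

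A few smaller points.

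\emph{Jensen step.} The one-variable concavity of $\phi(x)=x\sqrt{\log(a/x)}$ is the right ingredient. But both $\hat\sigma_n$ and $\lVert F\rVert_{\mathbb{P}_n,2}$ are random, so it must be used through the jointly concave perspective $(x,y)\mapsto y\,\phi(x/y)$, together with its monotonicity in each argument. This is the device in van der Vaart and Wellner's local maximal inequality, and it resolves the random-normalizer issue you flagged.

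\emph{Deviation bound.} The Einmahl--Li and Adamczak inequalities carry constants $C(\eta,\delta)$ that depend on the slack $\eta$, so quoting them yields $K(q,\alpha)$. The stated $\alpha$-free $K(q)$, with the explicit $\alpha^{-1}n^{-1/2}\lVert M\rVert_{\P,2}t$ term, requires running the truncation yourself:
\begin{itemize}
\item truncate at a level $\rho\asymp\lVert M\rVert_{\P,2}$;
\item apply Bousquet's form of Talagrand's inequality to the truncated class, splitting $\sqrt{t\rho\,\E Z_\rho}\le\alpha\,\E Z_\rho+\alpha^{-1}\rho t$;
\item control the remainder by Hoffmann-J{\o}rgensen plus Markov with $q$ moments.
\end{itemize}

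\emph{The constant $c$.} Taking $t=\log n$ only gives $c=1$. For $c<1$, take $t=c^{-2/q}\log n$; this is where $K(q,c)$ picks up its dependence on $c$.
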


\begin{lemma}[Sequence characterization of uniform probabilistic statements]
\label{lemma:UniformSequenceSO}
Let $\{\cP_n\}_{n\ge1}$ be arbitrary classes of probability measures, let $\{b_n\}_{n\ge1}$ be a sequence of positive deterministic numbers, and for each $n$ and each $\P \in \cP_n$, let $Z_{n,\P}$ be a random element defined on a probability space with probability law denoted by $\P_{\P}$.
Then the following conditions are equivalent:
    \begin{enumerate}[itemsep=-.5ex, label=(\alph*)]
        \item for every $\epsilon>0$, $\sup_{\P\in\cP_n}\P_{\P}\big(\lVert Z_{n,\P} \rVert>\epsilon b_n\big)\to 0$;
        \item for every $\epsilon>0$ and every sequence $\{\P_n\}_{n\ge1}$ such that $\P_n\in\cP_n$ for all $n$, $\P_{\P_n}\big(\lVert Z_{n,\P_n} \rVert>\epsilon b_n\big)\to 0.$
    \end{enumerate}
Moreover, the following conditions are equivalent:
    \begin{enumerate}[itemsep=-.5ex, label=(\alph*)]
        \item for every deterministic sequence $\ell_n\to\infty$, $\sup_{\P\in\cP_n}\P_{\P}\big(\lVert Z_{n,\P} \rVert>\ell_n b_n\big)\to 0$;
        \item for every deterministic sequence $\ell_n\to\infty$ and every sequence $\{\P_n\}_{n\ge1}$ such that $\P_n\in\cP_n$ for all $n$, $\P_{\P_n}\big(\lVert Z_{n,\P_n}\rVert>\ell_n b_n\big)\to 0.$
    \end{enumerate}
The same equivalences remain valid when $\P_{\P}$ is replaced throughout by an outer probability measure.
\end{lemma}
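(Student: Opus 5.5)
The plan is to prove each equivalence directly from the definition of the supremum. The only nontrivial direction, (b)$\Rightarrow$(a), goes by contraposition: from a failure of the uniform statement we build a ``bad'' sequence of laws. For the first equivalence, (a)$\Rightarrow$(b) is immediate. For any sequence $\{\P_n\}$ with $\P_n\in\cP_n$ and any $\epsilon>0$ we have
\[
\P_{\P_n}\big(\lVert Z_{n,\P_n}\rVert>\epsilon b_n\big)\le \sup_{\P\in\cP_n}\P_{\P}\big(\lVert Z_{n,\P}\rVert>\epsilon b_n\big)\to 0 .
\]

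For (b)$\Rightarrow$(a), I would suppose (a) fails. Then there exist $\epsilon>0$, $\eta>0$ and an infinite set of indices $N\subseteq\mathbb{N}$ such that $\sup_{\P\in\cP_n}\P_{\P}(\lVert Z_{n,\P}\rVert>\epsilon b_n)>\eta$ for every $n\in N$. By the definition of the supremum, for each $n\in N$ I choose $\P_n\in\cP_n$ with $\P_{\P_n}(\lVert Z_{n,\P_n}\rVert>\epsilon b_n)>\eta/2$. For $n\notin N$ I let $\P_n\in\cP_n$ be arbitrary; here I assume the classes are nonempty, since otherwise the supremum is trivially $0$ or the convention is vacuous. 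The resulting sequence $\{\P_n\}$ satisfies $\P_n\in\cP_n$ for all $n$, yet $\P_{\P_n}(\lVert Z_{n,\P_n}\rVert>\epsilon b_n)$ does not converge to $0$, because it exceeds $\eta/2$ along the infinite set $N$. This contradicts (b). No measurable selection is needed, since only one law is chosen for each $n$.

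The second equivalence follows by the same argument with $\epsilon b_n$ replaced by $\ell_n b_n$ for a fixed deterministic sequence $\ell_n\to\infty$. Again (a)$\Rightarrow$(b) is trivial. If (a) fails for some $\ell_n$, there exist $\eta>0$ and infinitely many $n$ with supremum exceeding $\eta$, and the same near-maximizer construction produces a sequence $\{\P_n\}$ violating (b) for that same $\ell_n$. The order of quantifiers is ``for every $\ell_n$'' on both sides, so fixing $\ell_n$ first causes no issue. Finally, the argument uses only monotonicity of the relevant probability in the event and the definition of the supremum over $\cP_n$. Both facts hold verbatim for outer probabilities $\P^o_{\P}$, so the outer-measure version follows with no change.

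I do not expect a genuine obstacle. The only point requiring care is the bookkeeping of quantifiers: $\epsilon$ or $\ell_n$ must be fixed before extracting the infinite index set and the near-maximizers, and the sequence must be completed by arbitrary choices off that set so that it is defined for every $n$.
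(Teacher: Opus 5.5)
Your proof is correct and follows the paper's argument step for step. The easy direction bounds the single-law probability by the supremum, and the converse is by contraposition: you choose near-maximizers with probability exceeding $\eta/2$ along an infinite index set and extend the sequence arbitrarily, then reuse the same argument for the $\ell_n b_n$ version and for outer probabilities. You also note explicitly that each $\mathcal{P}_n$ must be nonempty (otherwise (b) holds vacuously while (a) can fail), which the paper leaves implicit.
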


\begin{proof}
{\bf Part I.} We first prove the sequence and uniform characterizations for convergence to zero in probability are equivalent. It is trivial that (a) implies (b). Conversely, suppose (a) fails, then there exist $\epsilon_0>0$, $\eta_0>0$, and a subsequence $\{n_k\}_{k\ge1}$ such that
\begin{align*}
\sup_{\P\in\cP_{n_k}}\P_{\P}\big(\lVert Z_{n_k,\P}\rVert>\epsilon_0 b_{n_k}\big)>\eta_0
\end{align*}
for all $k$. Hence one can choose $\P_{n_k}\in\cP_{n_k}$ such that
\begin{align*} \P_{\P_{n_k}}\big(\lVert Z_{n_k,\P_{n_k}}\rVert>\epsilon_0 b_{n_k}\big)>\eta_0/2
\end{align*}
for all $k$, Extending $\{\P_{n_k}\}_{k\ge1}$ arbitrarily to a full sequence $\{\P_n\}_{n\ge1}$ with $\P_n\in\cP_n$ contradicts (b).

\vspace{0.2cm}

{\bf Part II.} Similarly, to prove the sequence and uniform characterizations of boundedness in probability are equivalent, we note (a) implies (b) from
\begin{align*}
\P_{\P_n}\big(\|Z_{n,\P_n}\|>\ell_n b_n\big)
\le    \sup_{\P\in\cP_n}\P_{\P}\big(\|Z_{n,\P}\|>\ell_n b_n\big).
\end{align*}
Conversely, if (a) fails, there then exist a deterministic sequence $\ell_n\to\infty$, a constant $\eta_0>0$, and a subsequence $\{n_k\}_{k\ge1}$ such that
\begin{align*}
    \sup_{\P\in\cP_{n_k}}\P_{\P}\big(\lVert Z_{n_k,\P}\rVert>\ell_{n_k} b_{n_k}\big)>\eta_0
\end{align*}
for all $k$. Hence one can choose $\P_{n_k}\in\cP_{n_k}$ such that
\begin{align*}
    \P_{\P_{n_k}}\big(\lVert Z_{n_k,\P_{n_k}}\rVert>\ell_{n_k} b_{n_k}\big)>\eta_0/2
\end{align*}
for all $k$. Extending $\{\P_{n_k}\}_{k\ge1}$ arbitrarily to $\{\P_n\}_{n\ge1}$ with $\P_n\in\cP_n$ contradicts (b).

\vspace{0.2cm}
{\bf Part III.} The last claim for outer probabilities follows by the same proof, with $\P_{\P}$ replaced throughout by the corresponding outer probability.
\end{proof}

Given a real-valued random quantity $V_n$ defined in the product probability space \eqref{eq:ProbSpace}, we say $V_n$ is of an order $o_{\P_W}^o(1)$ in $\P_X^o$-probability if for any $\epsilon, \delta>0$
\begin{align*}
    \P_X^o \big( \P_{W|X}^o(\lvert V_n \rvert > \epsilon) > \delta \big) \to 0 ~~~~~\text{as } n \to \infty.
\end{align*}
The translation of stochastic orders, given in Lemma \ref{lemma:StoOrders}, shows that such quantites do not have an impact on the bootstrap distribution, which plays an important role in our proof. Suppose that some measurability conditions are satisfied such that the Fubini's theorem in the following equation~\eqref{eq:DML,Lemma,Meas} can be used freely. We then have the following claim holds true.

\begin{lemma}[Transition of stochastic orders, modified from Lemma 3 in \cite{cheng2010bootstrap}]\label{lemma:StoOrders}
  For a random quantity $V_n$, 
   we have 
  \[
  V_n = o_{\P_{XW}}^o(1) \text{ if and only if }V_n=o_{\P_W}^o(1) \text{ in $\P_{X}^o$-probability.}  
  \]
  Moreover, when a random variable $Q_n$ on the product probability space has a bootstrap distribution that asymptotically imitates the unconditional distribution of $Q$, which has a continuous cumulative distribution function, that is,
  \begin{align*}
      \sup_{t \in \bR} \big\lvert \P_{W|X}\big(Q_n \le t \big) - \P(Q \le t) \big\rvert &= o_{\P_{X}}^o(1),
  \end{align*}
  we then also have the same claim holds true for $Q_n + V_n$, if $V_n=o_{\P_W}^o(1)$ in $\P_{X}^o$-probability:
  \begin{align*}
       \sup_{t \in \bR} \big\lvert \P_{W|X}\big(Q_n+V_n \le t \big) - \P(Q \le t) \big\rvert &= o_{\P_{X}}^o(1). \yestag \label{eq:StoOrd,op1,Slutsky}
  \end{align*}
\end{lemma}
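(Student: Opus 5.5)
We prove Lemma~\ref{lemma:StoOrders} in two parts: first the equivalence of the two notions of negligibility, then the Slutsky-type transfer \eqref{eq:StoOrd,op1,Slutsky}.

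\textbf{Equivalence.} Fix $\epsilon>0$ and set $\phi_n(X):=\P_{W|X}^o(\lvert V_n\rvert>\epsilon)$. This is a $[0,1]$-valued function of the data. Under the assumed measurability, the Fubini identity \eqref{eq:DML,Lemma,Meas} gives
\[
\P_{XW}^o(\lvert V_n\rvert>\epsilon)=\E_X^o[\phi_n].
\]
\emph{Forward direction.} If $V_n=o_{\P_{XW}}^o(1)$, then $\E_X^o[\phi_n]\to0$. Markov's inequality for outer expectations then gives $\P_X^o(\phi_n>\delta)\le\delta^{-1}\E_X^o[\phi_n]\to0$ for every $\delta>0$.

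\emph{Converse.} Since $\phi_n\le1$, we bound
\[
\E_X^o[\phi_n]\le\delta+\P_X^o(\phi_n>\delta).
\]
Let $n\to\infty$ and then $\delta\downarrow0$. The only care needed is that outer expectation is subadditive and monotone, which is all these two steps use.

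\textbf{Slutsky transfer.} Write $F(t):=\P(Q\le t)$. For any $\epsilon>0$ and $t\in\bR$, the sandwich inequalities
\[
\P_{W|X}(Q_n\le t-\epsilon)-\P_{W|X}(\lvert V_n\rvert>\epsilon)\le \P_{W|X}(Q_n+V_n\le t)\le \P_{W|X}(Q_n\le t+\epsilon)+\P_{W|X}(\lvert V_n\rvert>\epsilon)
\]
hold with outer probabilities where needed. Taking the supremum over $t$, and using the hypothesis on $Q_n$ at the shifted points $t\pm\epsilon$, we get
\[
\sup_t\lvert\P_{W|X}(Q_n+V_n\le t)-F(t)\rvert\le \sup_s\lvert\P_{W|X}(Q_n\le s)-F(s)\rvert+\omega_F(\epsilon)+\phi_n(X),
\]
where $\omega_F(\epsilon):=\sup_t\{F(t+\epsilon)-F(t-\epsilon)\}$.

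The three terms on the right are handled as follows.
\begin{itemize}
\item The first term is $o_{\P_X}^o(1)$ by hypothesis.
\item The third term is $o_{\P_X}^o(1)$ by the first part of the lemma.
\item The middle term is deterministic. Since $F$ is a continuous distribution function, it is uniformly continuous on $\bR$: it is continuous, with limits $0$ and $1$ at $\mp\infty$. Hence $\omega_F(\epsilon)\to0$ as $\epsilon\downarrow0$.
\end{itemize}
Given $\eta>0$, choose $\epsilon$ with $\omega_F(\epsilon)<\eta/3$. The event that the left side exceeds $\eta$ is then contained in the union of two events: the first term exceeds $\eta/3$, or $\phi_n$ exceeds $\eta/3$. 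Both have vanishing $\P_X^o$-probability, which yields \eqref{eq:StoOrd,op1,Slutsky}.

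\textbf{Expected obstacle.} The main delicacy is measurability, since all probabilities involved are outer probabilities. Two issues arise:
\begin{itemize}
\item $\P_{W|X}^o(\lvert V_n\rvert>\epsilon)$ may not be measurable in $X$.
\item The Fubini interchange for outer measures holds only as an inequality in general.
\end{itemize}
I would handle both through the stated standing assumption that Fubini holds in \eqref{eq:DML,Lemma,Meas}, using measurable majorants where an inequality in the right direction suffices. This mirrors Lemma~3 of \cite{cheng2010bootstrap}.
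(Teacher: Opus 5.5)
Your proof is correct and follows the paper's argument: Markov's inequality plus the assumed Fubini identity for the equivalence, and the $\epsilon$-sandwich for the Slutsky step. The only difference is at the end. You pass to the supremum directly, using the uniform hypothesis at $t\pm\epsilon$, the modulus $\omega_F(\epsilon)$ of the uniformly continuous $F$, and subadditivity of outer probability instead of inner probabilities, whereas the paper takes pointwise limits at continuity points and then invokes P\'olya's theorem. Your version is slightly cleaner, since applying P\'olya to conditional distribution functions that converge only in probability needs an extra grid or subsequence argument that the paper leaves implicit.
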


\begin{proof}

{\bf Part I.}
    We first prove 
    \begin{align*}
        V_n = o_{\P_{XW}}^o(1) \text{ if and only if } V_n=o_{\P_W}^o(1) \text{ in $\P_{X}^o$-probability}.
    \end{align*}
    For all $\epsilon,\delta>0$, by Markov's inequality one has
    \begin{align*}
        \P_{X}^o(  \P_{W|X}^o(\lvert V_n \rvert \ge \epsilon) \ge \delta ) \le \frac{1}{\delta} \E_{X}^o\big[\P_{W|X}^o(\lvert V_n \rvert \ge \epsilon) \big] = \frac{1}{\delta} \E_{X}^o\big[\E_{W|X}^o\{\ind(\lvert V_n \rvert \ge \epsilon)\} \big].
    \end{align*}
    From Fubini's theorem for repeated outer expectations \citep[Lemma 6.14]{kosorok2008introduction}, we can further bound the righthand side of the above by $$\E_{XW}^o [\ind(\lvert V_n \rvert \ge \epsilon)] = \P_{XW}^o(\lvert V_n \rvert \ge \epsilon).$$ Therefore we conclude that $V_n = o_{\P_{XW}}^o(1)$ implies $V_n=o_{\P_W}^o(1)$ in $\P_{X}^o$-probability. 
    
    The other direction follows from for any $\epsilon$ and an arbitrary $\eta$,
    \begin{align*}
        \P_{XW}^o(\lvert V_n \rvert \ge \epsilon) &= \E_{X}^o\big[\P_{W|X}^o(\lvert V_n \rvert \ge \epsilon) \big]
        \yestag \label{eq:DML,Lemma,Meas} \\ 
        &\le \E_{X}^o\big[\P_{W|X}^o(\lvert V_n \rvert \ge \epsilon) \ind \big(\P_{W|X}^o(\lvert V_n \rvert \ge \epsilon) \ge \eta \big) \big] \\
        & ~ + \E_{X}^o\big[\P_{W|X}^o(\lvert V_n \rvert \ge \epsilon) \ind \big(\P_{W|X}^o(\lvert V_n \rvert \ge \epsilon) < \eta \big) \big] \\
        &= \E_{X}^o\big[ \ind \big(\P_{W|X}^o(\lvert V_n \rvert \ge \epsilon) \ge \eta \big) \big] + \eta \\
        &\le \P_{X}^o\big( \P_{W|X}^o(\lvert V_n \rvert \ge \epsilon) \ge \eta \big) + \eta.
    \end{align*}
    Specially, we note that any random element $V_n$ defined only on $( \Omega_X^\infty, \cA_X^\infty, \P_X^\infty )$ with stochastic order $o_{\P_X}^o(1)$ is also of of an order $o_{\P_W}^o(1)$ in $\P_X^o$-probability. As for any $\epsilon, \delta>0$, we have
    \begin{align*}
        \P_X^o \big( \P_{W|X}^o(\lvert V_n \rvert > \epsilon) > \delta \big) &\le \frac{1}{\delta} \E_X^o \big[ \E_{W|X}^o\{\ind(\lvert V_n \rvert > \epsilon) \}\big] = \frac{1}{\delta} \E_X^o \big[ \ind(\lvert V_n \rvert > \epsilon)\big] =\frac{1}{\delta} \P_X^o (\lvert V_n \rvert > \epsilon),
    \end{align*}
    by Markov's inequality and noting $V_n$ does not depend on the bootstrap weights.

\vspace{0.2cm}

    {\bf Part II.}
    Now we proceed to prove that $V_n$ does not have an impact on the bootstrap distribution of $Q_n$. For any $t \in \bR$ and $\epsilon>0$, on one hand, we have 
    \begin{align*}
         \P_{W|X}^o(Q_n+V_n \le t) &= \P_{W|X}^o(Q_n+V_n \le t, \lvert V_n \rvert \ge \epsilon) + \P_{W|X}^o(Q_n+V_n \le t, \lvert V_n \rvert < \epsilon) \\
         &\le \P_{W|X}^o(\lvert V_n \rvert \ge \epsilon) + \P_{W|X}^o(Q_n \le t + \epsilon).
    \end{align*}
    On the other hand, we have
    \begin{align*}
        \P_{W|X}^o(Q_n+V_n \le t) &= 1-\P_{W|X}^i(Q_n+V_n> t) \\
        &=  1-\P_{W|X}^i(Q_n+V_n> t, \lvert V_n \rvert < \epsilon) -\P_{W|X}^i(Q_n+V_n> t, \lvert V_n \rvert \ge \epsilon) \\
        & \ge 1-\P_{W|X}^i(Q_n> t-\epsilon) -\P_{W|X}^i( \lvert V_n \rvert \ge \epsilon),
    \end{align*}
    where $\P_{W|X}^i$ stands for the corresponding inner probability. Combining these two inequalities, we obtain that, for any $t \in \bR$ and $\epsilon>0$, 
    \begin{align*}
        o_{\P_{X}}^o(1) + \P_{W|X}^o(Q_n \le t-\epsilon) \le \P_{W|X}^o(Q_n+V_n \le t) \le \P_{W|X}^o(Q_n \le t + \epsilon) + o_{\P_{X}}^o(1).
    \end{align*}
    Accordingly, at all $t$ such that $ \P(Q \le t)$ is continuous at $t$, we can take $\epsilon \downarrow 0$ and establish 
    \[
    \P_{W|X}^o(Q_n+V_n \le t) = \P(Q \le t) + o_{\P_{X}}^o(1). 
    \]
    The uniform result in \eqref{eq:StoOrd,op1,Slutsky} follows from Polya's theorem.
\end{proof}


{
\bibliographystyle{apalike}
\bibliography{AMS}
}

\end{document}